 \newcommand{\R}{\ensuremath{\mathbb{R}}}
 \newcommand{\ba}{\begin{align*}}
 \newcommand{\ea}{\end{align*}}
 \newcommand{\na}{\nabla}
\newcommand{\la}{\langle}
\newcommand{\ra}{\rangle}
\newcommand{\ep}{\epsilon}
 \def\ExtendSymbol#1#2#3#4#5{\ext@arrow 0099{\arrowfill@#1#2#3}{#4}{#5}}
 \def\ExtendSymbol#1#2#3#4#5{\ext@arrow 0099{\arrowfill@#1#2#3}{#4}{#5}}
 \newcommand\longright[2][]{\ExtendSymbol{-}{-}{\rightarrow}{#1}{#2}}
\def\XXint#1#2#3{{\setbox0=\hbox{$#1{#2#3}{\int}$ }
\vcenter{\hbox{$#2#3$ }}\kern-.55\wd0}}
\numberwithin{equation}{section}
\newtheorem{thm}{Theorem}[section]
\newtheorem{prop}[thm]{Proposition}
\newtheorem{lem}[thm]{Lemma}
\newtheorem{rem}[thm]{Remark}
\newtheorem{defn}[thm]{Definition}
\title{Canonical diffeomorphisms of manifolds near spheres}
\author{Bing Wang, Xinrui Zhao}
\date{}
\begin{document}
\maketitle

\begin{abstract}
For a given Riemannian manifold $(M^n, g)$ which is near standard sphere $(S^n, g_{round})$ in the Gromov-Hausdorff topology and  satisfies $Rc \geq n-1$, 
it is known by Cheeger-Colding theory that $M$ is diffeomorphic to $S^n$. 
A diffeomorphism $\varphi: M \to S^n$ was constructed in \cite{2} using Reifenberg method. 
In this note, we show that a desired diffeomorphism can be constructed canonically. 
Let $\{f_i\}_{i=1}^{n+1}$ be the first $(n+1)$-eigenfunctions of $(M, g)$ and $f=(f_1, f_2, \cdots, f_{n+1})$.
Then the map $\tilde{f}=\frac{f}{|f|}: M \to S^n$ provides a diffeomorphism, and $\tilde{f}$ satisfies a uniform bi-H\"older estimate.
We further show that this bi-H\"older estimate is sharp and cannot be improved to a bi-Lipschitz estimate.
Our study could be considered as a continuation of Colding's work \cite{5}\cite{6} and Petersen's work \cite{13}.
\end{abstract}

%\tableofcontents

\section{Introduction}

%\subsection{History and Motivation of This Problem}

%\subsubsection{History}

%\subsubsection{三级节标题}

%\paragraph{四级节标题}

%\subparagraph{五级节标题}

For a Riemannian manifold $(M^{n}, g)$ satisfying the Ricci lower bound
\begin{align}
    Rc \geq n-1,    \label{eqn:OH31_8}
\end{align}
there exist many rigidity theorems. 
For example,  the Lichnerowicz-Obata theorem (cf.~\cite{11}~\cite{12}) states that such a manifold must satisfy
\begin{align}
  \lambda_1 \geq n,    \label{eqn:OH30_1}
\end{align}
and the equality holds if and only if $(M, g)$ is isometric to the standard sphere $(S^{n}, g_{round})$ with constant sectional curvature $1$.    
Here $\lambda_1$ is the first eigenvalue of the Laplace operator $-\Delta$. On the other hand, it follows from the Bishop-Gromov volume comparison and Myer's theorem (cf.~\cite{24}) that 
\begin{align}
 Vol(M, g) \leq Vol(S^{n}, g_{round})=(n+1) \omega_{n+1}, \label{eqn:OH30_2}
\end{align}
and the equality holds if and only if $(M,g)$ is isometric to the standard sphere.  Here $\omega_{n+1}$ is the volume of the standard ball in the Euclidean space $\R^{n+1}$. 

In both the Lichnerowicz-Obata theorem and the Bishop-Gromov-Myer theorem, the standard sphere $(S^{n}, g_{round})$ plays the role of a model space. 
If $(M, g)$ is isometric to $(S^{n}, g_{round})$,  then the multiplicity of the eigenvalue $\lambda_1$ is $n+1$.   Namely, we have
\begin{align*}
    \lambda_1=\lambda_2=\cdots=\lambda_{n+1}=n. 
\end{align*}
Let $\{f_1, f_2, \cdots, f_{n+1}\}$ be an orthonormal basis of the eigenspace corresponding to the eigenvalue $n$,  in the sense that
\begin{subequations}
  \begin{empheq}[left = \empheqlbrace \,]{align}
    &\frac{1}{n+1} \fint_{S^{n}} |f_i|^2 =1, \quad \forall  \;1 \leq i \leq n+1;  \label{eqn:OI01_1a} \\
    & \fint_{S^{n}} f_i f_j =0, \quad \forall \; 1 \leq i< j \leq n+1.   \label{eqn:OI01_1b}
  \end{empheq}
\label{eqn:OI01_1}  
\end{subequations}
\hspace{-3mm}
Here $\fint_{S^{n}}$ denotes taking average on $(S^n,g_{round})$. Then the map
\begin{align}
    f \coloneqq (f_1, f_2, \cdots, f_{n+1})   \label{eqn:OI01_2}
\end{align}
is an embedding of $S^{n}$ into $\R^{n+1}$.  More precisely, $f$ is an isometry from $(S^n, g_{round})$ to the hyper-surface
\begin{align}
  \tilde{S} \coloneqq \left\{(x^1, x^2, \cdots, x^{n+1}) \left| (x^{1})^2 + (x^2)^2 + \cdots + (x^{n+1})^2=1  \right. \right\}  \subset \R^{n+1}   
  \label{eqn:OH31_9}
\end{align}
with the metric induced from the Euclidean space. 
Define
\begin{align}
  &pr: \R^{n+1} \backslash \{0\} \mapsto \tilde{S},   \quad      (x^1, x^2, \cdots, x^{n+1})  \mapsto \frac{(x^1, x^2, \cdots, x^{n+1}) }{\sqrt{(x^{1})^2 + (x^2)^2 + \cdots + (x^{n+1})^2}};  \label{eqn:OH31_10} \\
  & \tilde{f} \coloneqq pr \circ f.      \label{eqn:OI01_3}      
\end{align}
Thus $\tilde{f}$ provides an isometry between $(M, g)$ and $(\tilde{S}, g_{E}|_{\tilde{S}}) \cong (S^{n}, g_{round})$. 
In short, if $(M, g)$ is isometric to $(S^{n}, g_{round})$, then we can use eigenfunctions to construct a \textit{canonical} isometry $\tilde{f}$ from $(M, g)$ to $(S^{n}, g_{round})$.

Both rigidity theorems above have quantitative versions.  In the celebrated work~\cite{5}\cite{6},  Colding proved that  the Gromov-Hausdorff  closeness to the standard sphere is equivalent  to the  volume closeness to the standard sphere, under the assumption $Rc \geq n-1$.  
The seminal ideas and powerful techniques  introduced in these papers become one of the major inspirations for the structure theory of spaces with Ricci curvature bounded below, which is now called the Cheeger-Colding theory. 
The quantitative version of the Lichnerowicz-Obata theorem was studied in~\cite{13}.  Under the same assumption $Rc \geq n-1$,  Petersen showed that Gromov-Hausdorff  closeness to a sphere is equivalent  to $\lambda_{n+1}$ closeness to $n$.
He observed that the function $\tilde{f}$ could be defined for each $(M, g)$ satisfying $Rc \geq n-1$ and $\lambda_{n+1} \leq n+\epsilon(n)$, following the same route as in (\ref{eqn:OI01_1})-(\ref{eqn:OI01_3}),
and pointed out that this map should be a Gromov-Hausdorff approximation map.

Our study is motivated by both the works of Colding and of Petersen. We shall apply the techniques from the Cheeger-Colding theory (cf.~\cite{1}\cite{2}\cite{25}\cite{21}\cite{32}\cite{3}\cite{4}) to study the refined property of the map $\tilde{f}$. 
Our result reveals that $\tilde{f}$ is not only a Gromov-Hausdorff approximation, but also a diffeomorphism with uniform bi-H\"older estimate. 

\begin{thm}[\textbf{Main theorem}]
For each dimension $n \geq 3$ and small constant $\epsilon>0$, there exists a small constant $\delta=\delta(n, \epsilon)$ with the following properties.

Suppose $(M^{n}, g)$ is a closed Riemannian manifold satisfying 
\begin{align}
\begin{cases}
&Rc \geq n-1, \\
&Vol(M, g) \geq (n+1) \omega_{n+1}-\delta. 
\end{cases}  
\tag{$\dagger$}
\label{eqn:OI01_4}
\end{align}
Suppose $f_1,...,f_{n+1}$ are the first $(n+1)$-normalized eigenfunctions (cf. (\ref{eqn:OI01_1})) corresponding to eigenvalues $0<\lambda_1\leq...\leq\lambda_{n+1}$ 
of the Laplacian operator $-\Delta$. Then the map
\begin{align}
 \tilde{f} \coloneqq  \frac{f}{|f|}  \coloneqq \frac{(f_1, f_2, \cdots, f_{n+1})}{\sqrt{f_1^2+...+f_{n+1}^2}}
\label{eqn:OH30_3}
\end{align}
is well-defined and provides a diffeomorphism between M and $S^n$.
Furthermore, for each pair of points $x,y \in M$ satisfying $d(x,y) \leq 1$,  the following distance   bi-H\"older estimates hold
\begin{align}
   (1-\epsilon)d(x,y)^{1+\epsilon}\leq  d\left(\tilde{f}(x), \tilde{f}(y) \right) \leq (1+\epsilon)d(x,y), 
\label{eqn:OH30_4} 
\end{align} 
where $S^n$ is equipped with the canonical round metric.  The above uniform bi-H\"older estimates cannot be improved to be uniform bi-Lipschitz estimate. 
\label{thm:main}
\end{thm}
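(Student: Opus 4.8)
The plan is to prove the Main Theorem in three stages: first a smallness/regularity stage establishing that $\tilde f$ is well-defined and close to an isometry, then a degree/diffeomorphism stage, and finally the two separate inequalities in \eqref{eqn:OH30_4} together with the sharpness example. I would begin by recording the Cheeger-Colding consequences of the hypothesis \eqref{eqn:OI01_4}: under $Rc\ge n-1$ and $Vol(M,g)\ge (n+1)\omega_{n+1}-\delta$, the work of Colding gives that $(M,g)$ is $\Psi(\delta|n)$-Gromov-Hausdorff close to $(S^n,g_{round})$, where $\Psi(\delta|n)\to 0$ as $\delta\to 0$; moreover $\lambda_1,\dots,\lambda_{n+1}\to n$, the eigenfunctions $f_i$ satisfy $\fint_M |\nabla f_i|^2\to n$ and $\fint_M\big(|\nabla f_i|^2-\tfrac{n}{n+1}(1-f_i^2)\big)^2\to 0$ and $\fint_M |\nabla^2 f_i+f_i g|^2\to 0$ (the almost-Obata / almost-eigenfunction estimates, in the integral and then, via Cheeger-Colding segment and gradient estimates together with the Bochner formula, in a uniform $L^\infty$ sense away from a set of small measure, and finally everywhere with $C^0$ control). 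From this one deduces $\big|\,|f|^2-1\,\big|\le\Psi(\delta|n)$ pointwise, so $\tilde f=f/|f|$ is genuinely defined on all of $M$, and $|\nabla \tilde f|$ as well as $d(\tilde f(x),\tilde f(y))$ can be compared to $f$ and hence to the intrinsic geometry. The key analytic input here is the pointwise almost-rigidity for the coordinate functions, i.e. that $f$ is an almost-isometric almost-embedding; this is essentially contained in the harmonic-radius and $\epsilon$-regularity machinery of Cheeger-Colding and the segment inequality applied to $|\nabla^2 f_i + f_i g|$.

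Next I would upgrade the $C^0$ closeness to $C^{1,\gamma}$ control in order to see that $\tilde f$ is a local diffeomorphism. The strategy: the assumptions imply a definite lower bound on the harmonic radius at every point of $M$ (by the $\epsilon$-regularity theorem, since all tangent cones are $\R^n$ when $\delta$ is small), so in harmonic coordinates on balls of a fixed radius $r_0(n)$ the metric is controlled in $C^{1,\gamma}$; elliptic estimates for $\Delta f_i=-\lambda_i f_i$ then give uniform $C^{2,\gamma}$ bounds on the $f_i$, hence on $\tilde f$, on such balls. Combined with the integral smallness of $\nabla^2 f_i + f_i g$ and an $L^p$-to-$C^0$ elliptic estimate this forces $\|\nabla^2 f_i + f_i g\|_{C^0}\le\Psi(\delta|n)$, which in particular shows $df$ (hence $d\tilde f$) is nondegenerate everywhere and $\Psi$-close to an isometric immersion. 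Therefore $\tilde f:M\to S^n$ is a local diffeomorphism between closed $n$-manifolds; since $S^n$ is simply connected for $n\ge 3$ it is a covering map, and a degree computation — using that $\tilde f$ is a $\Psi$-GH approximation and hence $C^0$-close to the homeomorphism from \cite{2}, or directly that $\tilde f_*[M]=\pm[S^n]$ because $\tilde f$ is a small perturbation of a bijection — gives degree $\pm 1$, so the covering is trivial and $\tilde f$ is a diffeomorphism.

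For the metric estimate \eqref{eqn:OH30_4}: the upper bound $d(\tilde f(x),\tilde f(y))\le(1+\epsilon)d(x,y)$ follows directly from the $C^1$ bound, since $\|d\tilde f\|\le 1+\Psi(\delta|n)$ pointwise forces $\tilde f$ to be $(1+\Psi)$-Lipschitz, and one absorbs $\Psi(\delta|n)\le\epsilon$. The lower, Hölder bound is the genuinely hard part and is where I expect the main obstacle. The point is that $\tilde f^{-1}$ cannot be controlled by a $C^1$ bound in the reverse direction uniformly — this is exactly the content of the non-improvability claim — so instead one must run Colding's/Cheeger-Colding's Reifenberg-type argument: using the almost-rigidity of $f$ together with the segment inequality and the excess estimates, one shows that for points at scale $s=d(x,y)$ the restriction of $\tilde f$ to the ball $B_s(x)$ is, after rescaling by $s^{-1}$, a $\Psi(\delta|n)$-GH approximation onto a Euclidean ball, and iterating this down the scales (each scale contributing a factor $s^{\Psi}$) yields $d(\tilde f(x),\tilde f(y))\ge (1-\epsilon)d(x,y)^{1+\epsilon}$ once the distances are $\le 1$ and $\delta$ is small enough. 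The exponent loss $\epsilon$ is unavoidable precisely because the GH-error at each scale does not improve geometrically.

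Finally, for sharpness — that the bi-Hölder estimate cannot be promoted to bi-Lipschitz — I would exhibit an explicit sequence $(M_i,g_i)$ satisfying \eqref{eqn:OI01_4} with $\delta_i\to 0$ for which the bi-Lipschitz constants of $\tilde f_i$ blow up. The natural candidates are rotationally symmetric metrics on $S^n$ that are round outside a tiny cap but develop a small, highly-curved ``spike'' or slowly-collapsing neck of size $\rho_i\to 0$ near a point, keeping $Rc\ge n-1$ (one can choose the warping profile so that the Ricci bound holds, at the cost of a conical-type singularity smoothed at scale $\rho_i$, with volume loss $\to 0$); for such metrics the eigenfunctions $f_i$ are, to leading order, the restrictions of the ambient linear functions and the induced map $\tilde f_i$ distorts distances near the spike by a factor comparable to a positive power of $\rho_i$, so no uniform bi-Lipschitz bound can hold while the bi-Hölder bound with any fixed $\epsilon$ survives for $i$ large. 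Carrying out this construction carefully — verifying the Ricci bound and pinning down the eigenfunction asymptotics on the singular model — is the main technical cost of the sharpness part, but it is elementary ODE analysis compared to the Cheeger-Colding input in the main estimate.
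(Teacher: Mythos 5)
Your outline breaks down at its central regularity step. Under the hypotheses of (\ref{eqn:OI01_4}) there is no uniform lower bound on the harmonic radius and no $C^{1,\gamma}$ control of the metric on balls of definite size: Anderson-type $\epsilon$-regularity needs two-sided Ricci bounds, whereas Gromov--Hausdorff closeness to $\mathbb{R}^n$ under a one-sided bound only gives the Reifenberg (bi-H\"older) conclusion of \cite{2}. Hence the chain ``harmonic radius bound $\Rightarrow$ uniform $C^{2,\gamma}$ bounds on $f_i$ $\Rightarrow$ $\|\nabla^2 f_i+f_i g\|_{C^0}\le\Psi(\delta|n)$ $\Rightarrow$ $d\tilde f$ is everywhere nondegenerate and $\Psi$-close to an isometric immersion'' is not available, and its conclusion is in fact false: Proposition~\ref{prn:OI19_1} of the paper produces metrics satisfying (\ref{eqn:OI01_4}) with arbitrarily small $\delta$ for which $\inf_M |d\tilde f(\partial_r)|$ is as small as one likes, so $\tilde f$ is never uniformly $C^1$-close to an isometry --- this is precisely the sharpness assertion of the theorem, and your own Stage 3 remark that the reverse $C^1$ bound must fail contradicts your Stage 2. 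Only integral (scale-invariant, on balls) smallness of the Hessian quantities is available, not pointwise smallness. With that step gone, the local-diffeomorphism/covering-map argument has no foundation, and the H\"older lower bound cannot be left at the level of ``iterate a Reifenberg-type argument, each scale contributing $s^{\Psi}$'': the map $\mathbf{F}$ itself need not be a GH approximation at small scales, and the whole difficulty is to quantify the renormalization needed at each scale.

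What the paper does instead is to prove the bi-H\"older estimate first and then extract injectivity and nondegeneracy from it. Theorem~\ref{thm:OH30_10} shows that after an orthogonal rotation the first $n$ components of $T\tilde f$ give an $(n,\epsilon)$-splitting on a ball of definite radius; Theorem~\ref{thm:OI07_1} (the transformation theorem, adapted from \cite{3}) propagates the splitting property to every smaller scale $s$ after composing with a lower-triangular matrix $T_s$ with $|T_sT_{2s}^{-1}-I|\le\eta$, whence $|T_s|\le s^{-\xi}$; combining the $L^\infty$ estimate for splitting maps (Lemma~\ref{lma:OI10_1}) at scale $s$ with this accumulated distortion yields $|\mathbf{F}(y)-\mathbf{F}(x)|\ge(1-\xi)\,d(x,y)^{1+\xi}$, which is the mechanism your sketch omits. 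Injectivity then follows from this lower bound together with the GH-approximation property of $\tilde f$; surjectivity is proved by a mod-$2$ degree argument at a point where the splitting quantities are pointwise small (not via a covering-space argument, which would presuppose the unproved local diffeomorphism property); and nondegeneracy of $d\tilde f$ follows because a degenerate direction would force $d(\tilde f(\gamma(t)),\tilde f(\gamma(0)))=O(t^2)$, contradicting the $(1+\epsilon)$-H\"older lower bound. Your upper Lipschitz bound via the gradient estimate and your sharpness construction (rotationally symmetric, nearly conical metrics smoothed at small scale, with explicit eigenfunction asymptotics) are essentially the paper's; it is the regularity step and the diffeomorphism argument built on it that must be replaced.
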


Note that condition (\ref{eqn:OI01_4}) can be replaced by other equivalent conditions, like volume closeness to $(n+1)\omega_{n+1}$, or radius closeness to $\pi$, or $\lambda_{n+1}$ closeness to
$n$, as proven by Colding and Petersen (cf. Theorem~\ref{thm:OH30_5}).
Under one of these conditions, the facts that $M$ is diffeomorphic to $S^{n}$, and $M$ is uniformly bi-H\"older equivalent to $S^{n}$,  were first proved by Cheeger-Colding in~\cite{2} using the Reifenberg method. 
Theorem~\ref{thm:main} has the advantage that it provides a \textit{canonical} diffeomorphism. 
Similar phenomena hold for other canonical diffeomorphisms.  For example,  in~\cite{22}\cite{23},  the first named author used the Ricci flow to construct another canonical diffeomorphism from $M$ to $S^{n}$ with uniform bi-H\"older estimate.
It was also shown there that the bi-H\"older estimate cannot be improved to a bi-Lipschitz map.

Now we briefly describe the structure of the paper and outline the proof of Theorem~\ref{thm:main}. In Section~\ref{sec:pre}, we recall preliminary results which are of fundamental importance for this study.  
In Section~\ref{sec:transformation}, we prove transformation theorems (cf. Theorem~\ref{thm:OH30_10} and Theorem~\ref{thm:OI07_1}) in our setting.   Namely, up to orthogonal transformation and rescaling, in a local domain, the eigenfunctions $\{f_k\}_{k=1}^{n}$ provide a natural $(n,\epsilon)$-splitting map.  A blowup argument then shows that the $(n, \epsilon)$-splitting property will survive in any small scale, up to further transformations by lower triangular matrices.
 We adapt and substantially simplify some techniques from~\cite{3} in our special setting. 
In Section~\ref{sec:canonical},  we use the transformation theorems and the gradient estimate to setup the bi-H\"older estimate for $\tilde{f}$.  Then we apply the bi-H\"older estimate to show the injectivity of $\tilde{f}$ and the 
 non-degeneracy of $d\tilde{f}$.  We also show the surjectivity of $\tilde{f}$  by a mod-$2$ degree argument due to Cheeger~\cite{1}. Consequently $\tilde{f}$ is a diffeomorphism. 
Finally, in Section~\ref{sec:example}, we construct examples to show that the  bi-H\"older estimate is sharp.   The examples are constructed by gluing rotationally symmetric sphere metrics and football metrics in an explicit way. 
We also discuss further applications and generalization of our methods and results towards the end of this paper(cf. Remark~\ref{rmk:OI28_1} and Remark~\ref{rmk:OI28_2}). \\

{\bf Acknowledgements}:  

 Bing Wang would like to thank Shaosai Huang,  Yu Li  and Haozhao Li for helpful discussions.   
 Xinrui Zhao is grateful to Tobias Colding for his inspirational suggestions.  Bing Wang is supported by  NSFC 11971452, NSFC12026251, and YSBR-001.

\section{Preliminaries}
\label{sec:pre}

In this section we collect important results in the literature which will be used in the paper.

The following theorem, stating that several nearby-sphere conditions are equivalent, is due to Colding and Petersen. 

\begin{thm}(\cite{6}\cite{5}\cite{13}) 
For any manifold $(M, g)$ with $Rc \geq n-1$,  the following conditions are equivalent:
\begin{itemize}
\item[(1).] Vol(M) is close to Vol($S^n$);
\item[(2).] rad(M) is close to $\pi$;
\item[(3).] $M$ is Gromov-Hausdorff close to $S^n$;
\item[(4).] $\lambda_{n+1}$ is close to $n$. 
\end{itemize}
\label{thm:OH30_5}
\end{thm}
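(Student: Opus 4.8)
\emph{Proof plan.} Theorem~\ref{thm:OH30_5} assembles results of Colding~\cite{5}\cite{6} and Petersen~\cite{13}, so the plan is to prove the three equivalences $(1)\Leftrightarrow(3)$, $(1)\Leftrightarrow(2)$, $(3)\Leftrightarrow(4)$, isolating the genuinely analytic inputs and deducing the rest from Gromov precompactness and the Cheeger--Colding convergence theory. The bound $Vol(M)\le (n+1)\omega_{n+1}$ is Bishop--Gromov \eqref{eqn:OH30_2}, and the content of $(1)\Leftrightarrow(3)$ is that it is nearly attained iff $M$ is Gromov--Hausdorff close to $S^n$. The direction $(3)\Rightarrow(1)$ is Colding's volume continuity theorem~\cite{6}: $d_{GH}(M,S^n)\le\delta$ together with $Rc\ge n-1$ forces $Vol(M)\ge(n+1)\omega_{n+1}-\Psi(\delta|n)$. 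For $(1)\Rightarrow(3)$ I would argue by contradiction: a sequence $(M_i,g_i)$ with $Rc\ge n-1$, $Vol(M_i)\to(n+1)\omega_{n+1}$ and $d_{GH}(M_i,S^n)\ge\epsilon_0$ would, by Gromov precompactness, subconverge to a compact geodesic space $X$ of Hausdorff dimension $\le n$ with $\mathcal H^n(X)=(n+1)\omega_{n+1}$ by volume convergence; then relative volume comparison on $X$ (for the renormalized limit measure, which here equals $\mathcal H^n$) holds with equality, and the volume-cone-implies-metric-cone rigidity of Cheeger--Colding, applied iteratively, forces $X$ isometric to $(S^n,g_{round})$, contradicting $d_{GH}(X,S^n)\ge\epsilon_0$.

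For $(1)\Leftrightarrow(2)$, the implication $(1)\Rightarrow(2)$ is soft: $rad(\cdot)$ is $2$-Lipschitz for the Gromov--Hausdorff distance and $rad(S^n)=\pi$, so $(1)\Rightarrow(3)\Rightarrow(2)$. For $(2)\Rightarrow(1)$ I would use Colding's radius pinching argument~\cite{5}: if $rad(M)\ge\pi-\delta$ then every $x\in M$ admits $x^{*}$ with $d(x,x^{*})\ge\pi-\delta$, and the Abresch--Gromoll excess estimate forces $d(x,\cdot)+d(x^{*},\cdot)$ to be within $\Psi(\delta|n)$ of $\pi$; running this through the coarea identity $Vol\big(B(p,\pi)\big)=\int_0^{\pi}\mathcal H^{n-1}\big(\partial B(p,t)\big)\,dt$ for a radius-achieving $p$, and comparing geodesic-sphere areas to the spherical model via Bishop--Gromov, yields $Vol(M)\ge(n+1)\omega_{n+1}-\Psi(\delta|n)$. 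It is precisely this step that rules out collapse under radius pinching.

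For $(3)\Leftrightarrow(4)$: in the direction $(3)\Rightarrow(4)$, the previous step makes the volume nearly maximal under $(3)$, so the convergence $M\to S^n$ is noncollapsed and the renormalized Riemannian measures converge to the normalized volume on $S^n$; the spectral convergence theorem of Cheeger--Colding for measured Gromov--Hausdorff limits with a uniform Ricci lower bound then gives $\lambda_k(M)\to\lambda_k(S^n)$ for each fixed $k$, in particular $\lambda_{n+1}(M)\to n$. For $(4)\Rightarrow(3)$ I would follow Petersen~\cite{13}: with $f_1,\dots,f_{n+1}$ normalized as in \eqref{eqn:OI01_1}, integrating the Bochner formula for each $f_i$ against suitable test functions and using $\lambda_i\le\lambda_{n+1}\le n+\epsilon$ gives the almost-Obata estimates
\begin{align*}
 \fint_M \left| \nabla^2 f_i + f_i\, g \right|^2 \leq \Psi(\epsilon|n), \qquad
 \fint_M \left| \,|\nabla f_i|^2 - (1 - f_i^2)\, \right| \leq \Psi(\epsilon|n),
\end{align*}
hence $\sum_i f_i^2\to 1$ and $\sum_i|\nabla f_i|^2\to 1$ in $L^1(M)$; a segment-inequality and gradient-estimate argument then shows $f=(f_1,\dots,f_{n+1})$ is a $\Psi(\epsilon|n)$-Gromov--Hausdorff approximation onto $\tilde S\cong S^n$, i.e.\ $d_{GH}(M,S^n)\le\Psi(\epsilon|n)$.

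The main obstacles are the two imported analytic results --- the (almost-)equality case of Bishop--Gromov on singular Gromov--Hausdorff limits underlying Colding's volume and radius pinching theorems, and Petersen's integral Bochner estimates for the eigenfunctions together with the segment inequality --- whereas the passage to limits, the Lipschitz continuity of $rad(\cdot)$, the insertion of $(2)$, and the spectral convergence used for $(3)\Rightarrow(4)$ are comparatively soft once these are in hand.
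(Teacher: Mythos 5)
The paper does not actually prove Theorem~\ref{thm:OH30_5}; it is imported verbatim from Colding~\cite{5}\cite{6} and Petersen~\cite{13}, so there is no in-paper argument to compare against. Judged on its own, your decomposition $(1)\Leftrightarrow(3)$, $(1)\Leftrightarrow(2)$, $(3)\Leftrightarrow(4)$ is the standard one, and most of the attributions are accurate: $(3)\Rightarrow(1)$ is Colding's volume continuity, $(1)\Rightarrow(3)$ can indeed be run through a compactness-plus-rigidity argument on the limit space (your ``volume cone implies metric cone, iterated'' phrasing is anachronistic relative to \cite{6} but is in the spirit of Cheeger--Colding volume rigidity~\cite{26}), $(3)\Rightarrow(4)$ follows from spectral continuity under noncollapsed measured Gromov--Hausdorff convergence~\cite{21}, and $(4)\Rightarrow(3)$ is Petersen's almost-Obata/Gromov--Hausdorff-approximation argument, which is exactly what the rest of this paper builds on.

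The genuine gap is in your mechanism for $(2)\Rightarrow(1)$. Small excess for the almost-antipodal pairs is fine (and already follows from Bishop--Gromov, no Abresch--Gromoll needed), but the step ``coarea identity plus comparison of geodesic-sphere areas to the spherical model via Bishop--Gromov'' cannot produce the conclusion: Bishop--Gromov bounds $\mathcal{H}^{n-1}(\partial B(p,t))$ from \emph{above} by the model area, so the coarea computation only recovers the upper bound (\ref{eqn:OH30_2}), i.e.\ $Vol(M)\leq (n+1)\omega_{n+1}$. Relative volume comparison gives lower bounds on $Vol(B(p,t))$ only as a fraction of $Vol(M)$, which is vacuous for ruling out collapse. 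The absolute lower bound $Vol(M)\geq (n+1)\omega_{n+1}-\tau(\delta|n)$ is precisely the main theorem of \cite{5}, and its proof requires Colding's integral Hessian/Toponogov estimates (showing $\cos d(p,\cdot)$ has almost-spherical Hessian in an $L^2$ sense via Bochner, then using several almost-antipodal points and segment/maximal-function arguments to show a distance-function map to $S^n$ is almost measure-preserving). If you intend to import that wholesale as a black box, say so explicitly rather than sketching a Bishop--Gromov argument that, as written, proves an inequality in the wrong direction; as it stands this step of your outline would fail.
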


 For simplicity of notations, we denote $\tau(\ep|\,n,\lambda, \cdots)$ as a nonnegative function which depends on $\ep,\; n,\;\lambda...$ and converges to zero as $\ep\to 0$.
 Therefore, under the condition (\ref{eqn:OI01_4}), we have
 \begin{align}
  &0 \leq \pi -rad(M) \leq  \tau(\delta | n);  \label{eqn:OI09_16}\\
  &d_{GH} \{(M, g), (S^n, g_{round})\} < \tau(\delta | n); \label{eqn:OI09_17}\\
  &0 \leq \lambda_{n+1}-n \leq \tau(\delta | n). \label{eqn:OI09_18}
 \end{align}
In this paper, we let $C=C(n)$. Both $C$ and $\tau(\epsilon| n, \lambda, \cdots)$ may change from line to line.
For a functionn $h: M \to \R$, we define
 \begin{align*}
 \|h \|_p \coloneqq \left(\frac{1}{Vol(M)}\int_M |h|^p\right)^{\frac{1}{p}}. 
 \end{align*}

\begin{lem}(cf. Lemma 3.2 of~\cite{13}) For any manifold $(M, g)$ with $Rc \geq n-1$, assume that $\lambda_k\leq n+\ep$. Then for the corresponding eigenfunctions $f_i$ (i=1,2,...,k), letting $\phi=\Sigma_{i=1}^k \alpha_if_i$ with $|\Sigma_{i=1}^k \alpha_i^2-1|\leq\ep$, we have
\begin{align}
 &\phi^2+|\na \phi|^2\leq 1+\tau(\ep|\,n);  \label{eqn:OG13_1} \\
 &||\phi^2+|\na \phi|^2-1||_p\leq \tau(\ep|\,n,p).  \label{eqn:OG13_2}
\end{align}
\label{lma:OH30_11}
\end{lem}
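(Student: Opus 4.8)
\textbf{Proof plan for Lemma~\ref{lma:OH30_11}.}

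The strategy is to exploit the Bochner formula together with the eigenvalue near-extremality, following the scheme of Petersen but phrasing the argument so that every constant is absorbed into a $\tau(\ep\mid n,p)$-error. Fix the combination $\phi=\sum_{i=1}^k\alpha_if_i$; note that since each $f_i$ satisfies $-\Delta f_i=\lambda_if_i$ with $n\le\lambda_i\le n+\ep$, the function $\phi$ satisfies $-\Delta\phi=\sum_i\lambda_i\alpha_if_i$, which is not quite an eigenfunction but differs from $n\phi$ by a term of size $\tau(\ep\mid n)$ in every $L^p$ (using the $L^p$ bound on each $f_i$ from, e.g., the Cheng–Li–Yau estimate or directly from the normalization and Ricci lower bound). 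So morally $\phi$ is an $L^p$-approximate $n$-eigenfunction with $\|\phi\|_2$ close to $1$. The quantity to control is $G\coloneqq\phi^2+|\na\phi|^2$.

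First I would establish the integral (i.e.\ $L^p$) estimate \eqref{eqn:OG13_2}. Apply the Bochner formula to $\phi$:
\begin{align*}
\tfrac12\Delta|\na\phi|^2=|\he\phi|^2+\la\na\phi,\na\Delta\phi\ra+Rc(\na\phi,\na\phi)\ge|\he\phi|^2-n|\na\phi|^2+\la\na\phi,\na\Delta\phi\ra,
\end{align*}
where I used $Rc\ge n-1\ge -n$ crudely (in fact one wants the sharp $Rc\ge(n-1)g$, giving $+(n-1)|\na\phi|^2$). Integrating over $M$, the Laplacian term drops, and $\int\la\na\phi,\na\Delta\phi\ra=-\int\Delta\phi\,\Delta\phi\to -n^2\int\phi^2$ up to $\tau(\ep\mid n)$-errors (here again one expands $\Delta\phi$ in the $f_i$ and uses $\lambda_i\approx n$, orthogonality of the $f_i$, and their $L^2$-normalization). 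This yields $\int|\he\phi|^2\le (n^2-n(n-1)\cdot\text{something})\int\phi^2 + \tau$, and the sharp bookkeeping gives $\int|\he\phi+\phi\,g|^2\le\tau(\ep\mid n)$, i.e.\ $\he\phi\approx-\phi\,g$ in $L^2$. From $\he\phi=-\phi\,g+\text{error}$ one computes $\tfrac12\Delta G=|\he\phi+\phi g|^2+(\lambda-n)(\dots)\approx 0$ in $L^1$, and $\int G=\int\phi^2+\int|\na\phi|^2\to 1$ using $\int|\na\phi|^2=-\int\phi\Delta\phi\approx n\int\phi^2$ together with $\int\phi^2\approx\frac{1}{n+1}\cdot(n+1)=1$ wait — one must be careful with the averaging normalization \eqref{eqn:OI01_1a}: the correct bookkeeping gives $\int|\na\phi|^2+\int\phi^2\approx(1+\tfrac1n)\int\phi^2$ and then a segment/telescoping argument (or directly integrating $\tfrac12\Delta G\approx0$ against a cutoff and using a mean-value inequality for the almost-subharmonic function $1-G$, which is $\le\tau$ on average and $\ge -\tau$ pointwise by the next step) upgrades this to $\|G-1\|_p\le\tau(\ep\mid n,p)$.

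Second, for the pointwise bound \eqref{eqn:OG13_1}, I would run a maximum-principle / elliptic-regularity argument on $G$. Since $\tfrac12\Delta G = |\he\phi|^2+\la\na\phi,\na\Delta\phi\ra+Rc(\na\phi,\na\phi)$ and $\Delta\phi$ is $L^p$-close to $n\phi$, one gets $\tfrac12\Delta G\ge |\he\phi|^2 - n|\na\phi|^2 + (n-1)|\na\phi|^2 - (\text{small }L^p\text{ term})\cdot|\na\phi|$. Using the algebraic inequality $|\he\phi|^2\ge\frac1n(\Delta\phi)^2$ and $\Delta\phi\approx -n\phi$, this is $\gtrsim n\phi^2 - |\na\phi|^2 = nG - (n+1)|\na\phi|^2\ge nG-(n+1)G$... so $\Delta G\gtrsim -2(n+1)G$; combined with the $L^p$ bound $\|G\|_p\le\|G-1\|_p+1\le 1+\tau$ and Moser iteration (De Giorgi–Nash–Moser on the Riemannian manifold, valid here because $Rc\ge n-1$ gives volume doubling and a Neumann–Poincaré inequality via Buser), one gets $\sup G\le\fint G + \tau(\ep\mid n)\le 1+\tau(\ep\mid n)$. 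This is exactly \eqref{eqn:OG13_1}.

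\textbf{Main obstacle.} The delicate point is the precise bookkeeping of the $\tau$-errors coming from the fact that $\phi$ is \emph{not} an eigenfunction — each term "$\lambda_i$" must be replaced by "$n+O(\ep)$" and one must verify that the cross terms $\sum_{i\ne j}(\lambda_i-n)(\lambda_j-n)\alpha_i\alpha_j\int f_if_j$ etc.\ really are $o(1)$ and not merely $O(\ep)$ in a way that survives the Moser iteration (iteration amplifies constants, so one needs the $L^p$ norm of the error, for all finite $p$, to be $\tau(\ep\mid n,p)$ — this is where Lemma-type $L^\infty$ bounds on individual eigenfunctions $\|f_i\|_\infty\le C(n)$ under $Rc\ge n-1$, together with $\|f_i\|_2\le C$, are needed). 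A clean way to organize this is to first prove the integral estimate \eqref{eqn:OG13_2} for $\phi$ an honest linear combination using only $L^2$-orthogonality and $\lambda_i\le n+\ep$, then bootstrap to $L^p$ via the (already integral-controlled) almost-subharmonicity of $1-G$, and only at the very end invoke Moser iteration for the one-sided pointwise bound, keeping the subharmonic/elliptic estimates entirely on the scalar function $G$ rather than on the individual $f_i$.
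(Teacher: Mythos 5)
The paper does not prove this lemma at all: it is quoted verbatim (as Lemma 3.2 of Petersen~\cite{13}) in the Preliminaries, so the relevant comparison is with Petersen's argument, which your sketch essentially reproduces: integrated Bochner formula giving the almost-Obata bound $\fint_M|\he\phi+\phi g|^2\leq\tau(\ep|n)$, the average identity $\fint_M(\phi^2+|\na\phi|^2)=1+\tau(\ep|n)$, almost-subharmonicity of $G\coloneqq\phi^2+|\na\phi|^2$, and an elliptic mean-value estimate valid under $Rc\geq n-1$. All the right ingredients are on your list, and your bookkeeping of the integrated Bochner step (yielding $\he\phi\approx-\phi g$ in $L^2$) is correct.

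However, as written the argument has two genuine gaps. First, it is circular: your derivation of \eqref{eqn:OG13_2} explicitly invokes the pointwise bound ``from the next step'', while your derivation of \eqref{eqn:OG13_1} invokes $\|G-1\|_p\leq\tau$ from the first step; the ``clean organization'' announced at the end is the right intention, but you never supply the non-circular input, namely smallness of $G-1$ in an integral norm obtained \emph{without} \eqref{eqn:OG13_1}. The standard fix is the exact identity $\na G=2(\he\phi+\phi g)(\na\phi,\cdot)$, which together with the crude a priori bounds $|f_i|+|\na f_i|\leq C(n)$ (from $Rc\geq n-1$, $\mathrm{diam}\leq\pi$, $\lambda_i\leq n+1$) and the Hessian bound gives $\fint_M|\na G|\leq C\sqrt{\tau}$; the Neumann--Poincar\'e inequality (or the segment inequality of~\cite{26}\cite{1}) then yields $\fint_M|G-\fint_M G|\leq\tau$, and combined with $|\fint_M G-1|\leq\tau$ and $G\leq C(n)$ this proves \eqref{eqn:OG13_2} honestly. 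Second, the Moser step does not deliver the sharp constant as stated: iteration applied to $G$ with $\|G\|_p\leq1+\tau$ only gives $\sup G\leq C(n,p)$, not $\fint_M G+\tau$. You must instead apply Moser iteration to the subsolution $(G-1)_+$ (using $\|(G-1)_+\|_p\leq\tau$ from the now-independent \eqref{eqn:OG13_2}), or run the weak Harnack inequality on the supersolution $\sup G-G\geq0$; either closes the argument. Finally, minor slips: $\Delta\phi$ is close to $-n\phi$, not $n\phi$; and since $\fint_M|\na\phi|^2\approx n\fint_M\phi^2$, one has $\fint_M G\approx(n+1)\fint_M\phi^2$, not $(1+\tfrac1n)\fint_M\phi^2$ --- with the intended normalization $\fint_M f_i^2=\tfrac1{n+1}$ (what \eqref{eqn:OI01_1a} must mean, its printed form notwithstanding) this gives $\fint_M G\approx1$ as required.
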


\begin{lem}(cf. Lemma 4.3 and Lemma 5.2 of~\cite{13}) 
For any manifold $(M, g)$ with $Rc \geq n-1$, assume that $\lambda_{n+1}\leq n+\ep$. Then for $f=(f_1,...,f_{n+1})$ and $\tilde{f}=\frac{f}{|f|}$, we have
\begin{align}
&\left|  |f|^2-1 \right|\leq \tau(\ep|\,n);  \label{eqn:OH30_6} \\
&\left|\tilde{f}(x) \cdot \tilde{f}(x_0)-\cos \,d(x,x_0) \right|\leq\tau(\ep|\,n); \label{eqn:OH30_7}\\
&\left|d\tilde{f}-df \right|\leq \tau(\ep|\,n); \label{eqn:OH30_8}\\
&\sup_{\{v \in TM| |v|=1\}}\left|d\tilde{f}(v) \right| \leq 1+\tau(\ep|\,n). \label{eqn:OH30_9}
\end{align}
\label{lma:OH30_12}
\end{lem}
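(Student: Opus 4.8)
The plan is to reconstruct the argument of Petersen~\cite{13} (Lemmas~4.3 and~5.2), proving the four estimates in the order~(\ref{eqn:OH30_6}), (\ref{eqn:OH30_8}), (\ref{eqn:OH30_9}), (\ref{eqn:OH30_7}). Throughout one has, by Lichnerowicz--Obata, $n\le\lambda_1\le\cdots\le\lambda_{n+1}\le n+\ep$, so Lemma~\ref{lma:OH30_11} applies to every $\phi_\alpha:=\alpha\cdot f=\sum_i\alpha_i f_i$ with $|\alpha|$ close to $1$; and under~(\ref{eqn:OI01_4}) the reverse Bishop--Gromov inequality gives the non-collapsing bound $Vol(B(x,r))\ge c(n)\,r^n\,Vol(M)$ for $r\le 1$, since $B(x,\pi)=M$ by Myers. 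The device I would use repeatedly is that if $\phi_\alpha$ with $|\alpha|=1$ satisfies $\phi_\alpha(x)=1+O(\tau)$ at some point $x$, then by~(\ref{eqn:OG13_1}) it is essentially maximal at $x$, whence $|\na\phi_\alpha(x)|\le\tau$. As a first application: for every unit $\alpha$, $(\alpha\cdot f(x))^2\le\phi_\alpha(x)^2+|\na\phi_\alpha(x)|^2\le 1+\tau$, so $|f|^2=\sup_{|\alpha|=1}(\alpha\cdot f)^2\le 1+\tau$; and the bound $|\na\phi_\alpha|^2\le 1+\tau$ for all unit $\alpha$ says the symmetric matrix $\big(\la\na f_i,\na f_j\ra(x)\big)_{i,j}$ is $\le(1+\tau)I$, which, sharing its nonzero eigenvalues with the endomorphism $\sum_i\na f_i\otimes\na f_i$ of $T_xM$, gives $\sup_{|v|=1}|df(v)|\le 1+\tau$.

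For~(\ref{eqn:OH30_6}) I would combine $|f|^2\le 1+\tau$ with $\fint_M|f|^2=1+O(\tau)$ — which follows from~(\ref{eqn:OG13_2}) applied to each $f_i$ ($\alpha=e_i$), giving $(1+\lambda_i)\fint_M f_i^2=1+O(\tau)$, summed over $i$ — and the crude fact that $|f|^2$ is $C(n)$-Lipschitz (immediate from $|f_i|,|\na f_i|\le C(n)$): if $|f(x_0)|^2\le 1-\eta$ then $|f|^2\le 1-\tfrac{\eta}{2}$ on a ball of radius $\sim\eta$ about $x_0$, so non-collapsing forces $\fint_M|f|^2\le 1+\tau-c(n)\eta^{n+1}$, whence $\eta\le\tau$. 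For~(\ref{eqn:OH30_8}) and~(\ref{eqn:OH30_9}) I would apply the device with $\alpha=\tilde f(x)$ (a constant unit vector once $x$ is fixed): since $\phi_{\tilde f(x)}(x)=\tilde f(x)\cdot f(x)=|f(x)|=1+O(\tau)$ by~(\ref{eqn:OH30_6}), one gets $|\na\phi_{\tilde f(x)}(x)|\le\tau$, that is $\big|\la\tilde f(x),df_x(v)\ra\big|=\big|\sum_i\tilde f_i(x)\la\na f_i(x),v\ra\big|\le\tau$ for unit $v$. As $d\tilde f(v)=|f|^{-1}\big(df(v)-\la\tilde f,df(v)\ra\,\tilde f\big)$, combining this with~(\ref{eqn:OH30_6}) and $|df(v)|\le 1+\tau$ yields $|d\tilde f-df|\le\tau$; and since $d\tilde f(v)$ equals $|f|^{-1}$ times the orthogonal projection of $df(v)$ onto $\tilde f^{\perp}$, also $|d\tilde f(v)|\le|f|^{-1}|df(v)|\le 1+\tau$.

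The substantive estimate is~(\ref{eqn:OH30_7}), whose geodesic step I expect to be the main obstacle. Fix $x_0$ and write $\psi:=\phi_{\tilde f(x_0)}=\tilde f(x_0)\cdot f$, a normalized eigenfunction; since $\tilde f(x)\cdot\tilde f(x_0)=\psi(x)/|f(x)|$ and $|f|=1+O(\tau)$ by~(\ref{eqn:OH30_6}), it suffices to prove $|\psi(x)-\cos d(x,x_0)|\le\tau$. As $\psi(x_0)=|f(x_0)|=1+O(\tau)$, the device gives $|\na\psi(x_0)|\le\tau$. Because the eigenvalue deficit satisfies $\lambda_i-n\le\ep$, integrating the Bochner formula for each $f_i$ (using $Rc\ge n-1$) gives $\fint_M|\he f_i+f_i g|^2\le\tau^2$, so the fixed function $G:=\big(\sum_i|\he f_i+f_i g|^2\big)^{1/2}$ obeys $\fint_M G\le\big(\fint_M G^2\big)^{1/2}\le\tau$, while pointwise $|\he\psi+\psi g|=\big|\sum_i\tilde f_i(x_0)(\he f_i+f_i g)\big|\le G$. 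Along a unit-speed minimal geodesic $\gamma$ from $x_0$ to $x$, $u(t):=\psi(\gamma(t))$ satisfies $u(0)=1+O(\tau)$, $|u'(0)|\le\tau$ and $u''+u=(\he\psi+\psi g)(\gamma',\gamma')$, so variation of parameters yields $|u(\ell)-\cos\ell|\le\tau+\int_0^{\ell}G(\gamma(s))\,ds$ with $\ell=d(x,x_0)$. The obstacle is that $G$ is small only in $L^1$, whereas here it is integrated over the measure-zero curve $\gamma$; I would overcome this with the Cheeger--Colding segment inequality~\cite{2} for the fixed function $G$ over $M\times M$ (legitimate since $B(x_0,\pi)=M$), which makes $\int_\gamma G$ small for all pairs $(x_0,x)$ outside a subset of small measure in $M\times M$. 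Finally, as the maps $(x_0,x)\mapsto\tilde f(x_0)\cdot f(x)$ and $(x_0,x)\mapsto\cos d(x,x_0)$ are uniformly $C(n)$-Lipschitz and $M$ is non-collapsed, a standard density argument promotes this to the pointwise bound $|\psi(x)-\cos d(x,x_0)|\le\tau$ for all $x_0,x$, giving~(\ref{eqn:OH30_7}).
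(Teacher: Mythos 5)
The paper does not prove Lemma~\ref{lma:OH30_12}; it simply cites Lemmas~4.3 and~5.2 of Petersen~\cite{13}, so there is no in-paper argument to compare against. Your reconstruction is correct and follows the route of that cited source: the ``near-max implies small gradient'' trick from~(\ref{eqn:OG13_1}), the $L^1$/noncollapsing bootstrap for~(\ref{eqn:OH30_6}), the orthogonal-projection formula for $d\tilde f$ for~(\ref{eqn:OH30_8})--(\ref{eqn:OH30_9}), and the Bochner Hessian estimate combined with the Cheeger--Colding segment inequality and a Lipschitz density argument for~(\ref{eqn:OH30_7}) are exactly the tools Petersen uses, and you have assembled them soundly.
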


It is clear that all the inequalities in Lemma~\ref{lma:OH30_11} and Lemma~\ref{lma:OH30_12} are equalities for standard sphere. 
Note that (\ref{eqn:OH30_9}) is a gradient estimate,  which bounds the length of $d\tilde{f}(v)$ from above. 
However, the lower bound for $\displaystyle \inf_{\{v \in TM| |v|=1\}}\left|d\tilde{f}(v) \right| $ is not known. 
Actually,  we shall show in the final section(cf. Proposition~\ref{prn:OI19_1}) that a uniform positive lower bound  for $\displaystyle \inf_{\{v \in TM| |v|=1\}}\left|d\tilde{f}(v) \right| $ is impossible.

\begin{defn}
A smooth map $\mathbf{u}=(u_1, u_2, \cdots, u_k):\ B(x,r)\rightarrow \R^k$ is called a $(k, \epsilon)$-splitting if the following conditions are satisfied. 
\begin{itemize}
\item[(1).] $\mathbf{u}(x)=0$; 
\item[(2).] $\sup_{B(x,r)}|\na \mathbf{u}|^2 \leq k+\epsilon$;
\item[(3).] $\fint_{B(x,r)}|\langle\na u_{\alpha}, \na u_{\beta} \rangle-\delta_{\alpha\beta}| \leq \epsilon$ for each $1 \leq \alpha \leq \beta \leq k$;
\item[(4).] $r^2\fint_{B(x,r)} |\na^2 \mathbf{u}|^2 \leq \epsilon^2$. 
\end{itemize}
\label{dfn:OH30_14}
\end{defn}

The $(k, \epsilon)$-splitting map was introduced by Cheeger-Colding~\cite{26}. 
If $\epsilon=0$, then $\mathbf{u}$ exactly provides a splitting. For recent study in this direction, see~\cite{17} for example.  
In the special case that $k=n=\dim M$, we have the following lemma. 

\begin{lem}[\cite{21}]
Suppose $\mathbf{u}=(u_1,...,u_n): B(z,10)\rightarrow \R^n$ is smooth on a manifold $M^n$ with $Rc\geq -(n-1)$  such that
\begin{align*}
 &\sup\limits_{B(z,3)}|\na \mathbf{u}|\leq 2\sqrt{n}, \\
 &\fint_{B(z,10)}\left\{\sum\limits_{1\leq i\leq n}|\na u_i-1|^2+\sum_{1\leq i< j\leq n}|\langle\na u_i, \na u_j\rangle|+\sum\limits_{1\leq i\leq n}|Hess \,\,u_i|^2\right\}\leq \zeta.
\end{align*}
Then we have
\begin{align*}
\sup_{y\in \overline{B(z,1)}} \left||\mathbf{u}(y)-\mathbf{u}(z)|^2-d^2(y,z) \right|\leq C(n)\zeta^{\frac{1}{n+2}}.
\end{align*}
\label{lma:OI10_1}
\end{lem}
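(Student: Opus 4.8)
The plan is to combine one short computation along minimizing geodesics — expressing that $\na u_i$ is \emph{almost parallel} along geodesics because $\he u_i$ is small in $L^2$ — with the Cheeger--Colding segment inequality and Bishop--Gromov volume comparison; the latter two are what convert the \emph{global} smallness of $\langle\na u_i,\na u_j\rangle-\delta_{ij}$ and of $\he\mathbf u$ into pointwise control on $\overline{B(z,1)}$. Write $\xi(w)=\mathbf u(w)-\mathbf u(z)\in\R^n$, let $G(w)=\bigl(\langle\na u_i,\na u_j\rangle\bigr)_{1\le i,j\le n}$, and set $|\he\mathbf u|^2=\sum_i|\he u_i|^2$ and $\|G-I\|=\max_{i\le j}\bigl|\langle\na u_i,\na u_j\rangle-\delta_{ij}\bigr|$. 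From $\sup_{B(z,3)}|\na\mathbf u|\le 2\sqrt n$ one gets, integrating along geodesics, $|\xi|\le C(n)$ on $B(z,3)$; since also $|\na d^2(\cdot,z)|=2d\le 6$ there, each function $w\mapsto|\mathbf u(y)-\mathbf u(w)|^2-d^2(y,w)$ (with $y$ fixed in $\overline{B(z,1)}$) is $C(n)$-Lipschitz on $B(z,3/2)$. Finally, by Bishop--Gromov the integral hypothesis on $B(z,10)$ yields $\fint_{B(z,R)}\|G-I\|\le C(n)R^{-n}\zeta$ and $\fint_{B(z,R)}|\he\mathbf u|^2\le C(n)R^{-n}\zeta$ for $0<R\le 3$.

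For the core step, fix $x_1,x_2\in B(z,3/2)$ and a unit-speed minimizing geodesic $\gamma:[0,\ell]\to M$ from $x_1$ to $x_2$, $\ell=d(x_1,x_2)$; then $\gamma\subset B(z,3)$. Since $\gamma$ is a geodesic, $\tfrac{d}{ds}\langle\na u_i(\gamma(s)),\gamma'(s)\rangle=\he u_i(\gamma',\gamma')$, so integrating twice,
\[
 \Bigl|\,u_i(x_2)-u_i(x_1)-\ell\langle\na u_i(x_1),\gamma'(0)\rangle\,\Bigr|\ \le\ \ell\int_0^\ell\bigl|\he u_i(\gamma(s))\bigr|\,ds .
\]
Squaring, summing in $i$, absorbing cross terms with $|\na u_i|\le 2\sqrt n$ and $|\mathbf u(x_2)-\mathbf u(x_1)|\le C(n)\ell$, and using $\bigl|\sum_i\langle\na u_i(x_1),\gamma'(0)\rangle^2-1\bigr|\le C(n)\|G(x_1)-I\|$ (valid since $\mathrm d\mathbf u^{*}\mathrm d\mathbf u$ and $G$ have the same spectrum), we obtain, for \emph{every} such pair and geodesic,
\[
 \Bigl|\,|\mathbf u(x_1)-\mathbf u(x_2)|^2-d^2(x_1,x_2)\,\Bigr|\ \le\ C(n)\Bigl(\|G(x_1)-I\|+\int_0^\ell\bigl|\he\mathbf u(\gamma(s))\bigr|\,ds\Bigr).
\]

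To finish, fix $y\in\overline{B(z,1)}$ and a scale $s\in(0,\tfrac12]$, apply the last inequality to all pairs $(z',y')\in B(z,s)\times B(y,s)$, choosing for each pair the minimizing geodesic $\gamma_{z',y'}$ that minimizes $\int_\gamma|\he\mathbf u|^2$, average, and use the Lipschitz bound to replace $(z',y')$ by $(z,y)$:
\[
 \Bigl|\,|\mathbf u(y)-\mathbf u(z)|^2-d^2(y,z)\,\Bigr|\ \le\ C(n)s+C(n)\fint_{B(z,s)}\|G-I\|+C(n)\fint_{B(z,s)}\!\fint_{B(y,s)}\!\!\int_{\gamma_{z',y'}}\!\!|\he\mathbf u| .
\]
The middle term is $\le C(n)s^{-n}\zeta$ by Bishop--Gromov. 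For the last term, the segment inequality applied to $|\he\mathbf u|^2$ gives $\fint_{B(z,s)}\fint_{B(y,s)}\int_{\gamma_{z',y'}}|\he\mathbf u|^2\le C(n)s^{-n}\fint_{B(z,10)}|\he\mathbf u|^2\le C(n)s^{-n}\zeta$, whence by Cauchy--Schwarz along each geodesic ($\int_{\gamma}|\he\mathbf u|\le\sqrt{\ell}\,(\int_\gamma|\he\mathbf u|^2)^{1/2}$) and in the double average, $\fint\fint\int_{\gamma_{z',y'}}|\he\mathbf u|\le C(n)s^{-n/2}\zeta^{1/2}$. Hence
\[
 \Bigl|\,|\mathbf u(y)-\mathbf u(z)|^2-d^2(y,z)\,\Bigr|\ \le\ C(n)\bigl(s+s^{-n}\zeta+s^{-n/2}\zeta^{1/2}\bigr),
\]
and taking $s=\zeta^{1/(n+2)}$ (legitimate once $\zeta$ is below a dimensional constant, the estimate being trivial otherwise) balances $s$ with $s^{-n/2}\zeta^{1/2}$ and makes $s^{-n}\zeta=\zeta^{2/(n+2)}\le s$, giving $\le C(n)\zeta^{1/(n+2)}$ uniformly in $y\in\overline{B(z,1)}$.

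\textbf{Main difficulty.} The geodesic computation is short and is the conceptual heart; the real work lies in the averaging step, where one must invoke the segment inequality in exactly the right form — applied to $|\he\mathbf u|^2$ rather than to $|\he\mathbf u|$, so that Cauchy--Schwarz produces the favourable volume exponent $s^{-n/2}$ — and then balance the three error terms against the mean-value scale $s$ and the Bishop--Gromov volume ratios to land on the sharp exponent $\tfrac1{n+2}$ (a crude version of this balancing only yields $\tfrac1{2(n+1)}$). A minor technical point is the non-smoothness of $d^2(\cdot,z)$ across the cut locus, which is harmless here since $d^2$ enters only through its local Lipschitz constant and its values along minimizing geodesics; alternatively, the last step can be run as a maximum-principle/mean-value argument for the subsolution $|\xi|^2$, using $\Delta|\xi|^2=2\operatorname{tr}G+2\langle\xi,\Delta\mathbf u\rangle$ and the Laplacian comparison $\Delta d^2(\cdot,z)\le 2n+C(n)d^2$ in the barrier sense.
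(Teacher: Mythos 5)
Your proof is correct and follows essentially the route the paper itself indicates for this quoted lemma (it is imported from Cheeger--Colding via Lemma 4.7 of the ``Further details'' of \cite{4}, with the remark that the key point is the segment inequality as in \cite{1}): integrate the Hessian along minimizing geodesics, control $\sum_i\langle\nabla u_i,\gamma'(0)\rangle^2$ by $\|G-I\|$ using that the number of functions equals $\dim M$, average over pairs in $s$-balls via the segment inequality applied to $|\he \mathbf{u}|^2$ together with Bishop--Gromov, and balance at $s=\zeta^{1/(n+2)}$. The only cosmetic point is that if the diagonal hypothesis is read in the squared form $\fint_{B(z,10)}(|\nabla u_i|-1)^2\le\zeta$, then your intermediate bound $\fint_{B(z,s)}\|G-I\|\le C(n)s^{-n}\zeta$ should instead be derived by Cauchy--Schwarz at scale $s$, giving $C(n)s^{-n/2}\zeta^{1/2}$, which is of the same order as the Hessian term and leaves the balancing and the exponent $\tfrac{1}{n+2}$ unchanged.
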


Lemma~\ref{lma:OI10_1} originated from~\cite{21}.  It was generalized to spaces with mild singularities in~\cite{4}.
The statement here follows the one in Lemma 4.7 of the ``Futher details"  of~\cite{4}. 
The key point is to apply the segment inequality, as done in~\cite{1}.  Note that the number of maps and the dimension of $M^n$ should be the same.

\begin{thm}[$W^{1,2}$-convergence,~\cite{8}]
Let $(M^n_i,g_i,x_i,\mu_i)\rightarrow (X,d,x_{\infty},\mu)$ with $Ric_{M^n_i}\geq -(n-1)$ and $\mu_i=\frac{Vol}{Vol(B(x_i, 1))}$. 
Let $u_i: B(x_i, R) \rightarrow \mathbb{R}$ be smooth functions satisfying 
\begin{align*}
\fint_{B_R(x_i)}|u_i|^2+|\nabla u_i|^2+|\Delta u_i|^2\leq C.
\end{align*}
If $u_i$ converges in the $L^2$-sense to a $W^{1,2}$-function $u_{\infty}: B(x_{\infty}, R) \rightarrow \mathbb{R}$, then the following properties hold. 

\begin{itemize}
\item[(1).] $u_i \longright{W^{1,2}} u_{\infty}$ over $B(x_\infty, R)$;
\item[(2).] $\Delta u_i \longright{weakly-L^2}  \Delta u_{\infty}$ over $B(x_\infty, R)$;
\item[(3).] If $\displaystyle \sup_{B(x_i, R)}|\na u_i|$ is uniformly bounded by $L$, then $u_i \longright{W^{1,p}} u_\infty$ over $B(x_\infty, R)$ for any $p \in (0, \infty)$.
\end{itemize}
\label{thm:OH31_4}
\end{thm}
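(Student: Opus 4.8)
\textbf{Proof proposal for Theorem~\ref{thm:OH31_4} ($W^{1,2}$-convergence).}

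The plan is to follow the standard Cheeger-Colding-Honda framework for functional convergence on measured Gromov-Hausdorff limits of manifolds with lower Ricci bounds, using Mosco-type convergence of the Dirichlet energies as the backbone. First I would recall the functional-analytic setup: along the convergence $(M_i^n,g_i,x_i,\mu_i)\to(X,d,x_\infty,\mu)$, the normalized measures $\mu_i$ converge to $\mu$ in the measured Gromov-Hausdorff sense, and by the Cheeger-Colding theory (or the now-standard $\mathrm{RCD}$ formalism) the Cheeger energies $\mathrm{Ch}_i$ on $L^2(B_R(x_i),\mu_i)$ Mosco-converge to $\mathrm{Ch}_\infty$ on $L^2(B_R(x_\infty),\mu)$. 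The hypothesis gives a uniform bound $\fint_{B_R(x_i)}|u_i|^2+|\nabla u_i|^2+|\Delta u_i|^2\leq C$, so in particular $\sup_i\mathrm{Ch}_i(u_i)<\infty$, and combined with the assumed $L^2$-convergence $u_i\to u_\infty$, the lower-semicontinuity half of Mosco convergence yields $u_\infty\in W^{1,2}(B_R(x_\infty),\mu)$ together with the energy inequality $\liminf_i\mathrm{Ch}_i(u_i)\geq\mathrm{Ch}_\infty(u_\infty)$.

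To upgrade weak $W^{1,2}$-convergence to the strong convergence claimed in (1), the key step is to prove the reverse energy inequality $\limsup_i\mathrm{Ch}_i(u_i)\leq\mathrm{Ch}_\infty(u_\infty)$, which together with norm convergence and the Hilbert structure of the energies forces strong convergence of gradients. Here I would use the Laplacian bound: testing the equation defining $\Delta_i u_i$ against $u_i$ itself (with an appropriate cutoff to handle the boundary of $B_R$, exploiting the local nature of the statement by passing to a slightly smaller ball $B_{R'}$, $R'<R$) gives $\int|\nabla u_i|^2 = -\int u_i\,\Delta_i u_i + (\text{cutoff terms})$. The cutoff terms are controlled because $|u_i|$, $|\nabla u_i|$ and $|\Delta u_i|$ are all uniformly $L^2$-bounded and the cutoff can be chosen with bounded gradient (using the good cutoff functions of Cheeger-Colding adapted to lower Ricci bounds). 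Taking limits, the right-hand side converges — for the term $\int u_i\,\Delta_i u_i$ one uses part (2), which I address next — and one matches it with the corresponding expression $\int|\nabla u_\infty|^2 = -\int u_\infty\,\Delta_\infty u_\infty + (\text{cutoff terms})$ valid on the limit space by integration by parts for $\mathrm{RCD}$ spaces. This closes the energy gap and gives (1).

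For (2), weak $L^2$-convergence of $\Delta_i u_i$ to $\Delta_\infty u_\infty$: the uniform bound $\fint_{B_R}|\Delta_i u_i|^2\leq C$ gives, after passing to a subsequence, weak-$L^2$ convergence of $\Delta_i u_i$ to some $v\in L^2(B_R(x_\infty),\mu)$ (using that weak convergence of functions along mGH-converging spaces is well-defined, via test functions that are themselves strongly converging — e.g. Lipschitz functions with converging energies, which exist by the theory). To identify $v=\Delta_\infty u_\infty$ I would pair against a test function $\varphi\in\mathrm{Lip}_c(B_R(x_\infty))$ approximated by $\varphi_i\in\mathrm{Lip}_c(B_R(x_i))$ with $\varphi_i\to\varphi$ strongly in $W^{1,2}$; then $\int\varphi_i\,\Delta_i u_i = -\int\langle\nabla\varphi_i,\nabla u_i\rangle$, and using the strong $W^{1,2}$-convergence from (1) the right-hand side converges to $-\int\langle\nabla\varphi,\nabla u_\infty\rangle = \int\varphi\,\Delta_\infty u_\infty$; since this holds for all such $\varphi$, $v=\Delta_\infty u_\infty$, and since the limit is independent of the subsequence, the full sequence converges weakly. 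Note that (2) and (1) are genuinely intertwined — one way to organize the argument cleanly is to prove the weak convergence $\Delta_i u_i\rightharpoonup w$ for \emph{some} $w$ first (purely from the $L^2$-bound), use it in the energy-gap argument to get (1), and then go back and identify $w=\Delta_\infty u_\infty$ using the now-available strong gradient convergence.

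For (3), strong $W^{1,p}$-convergence for all $p\in(0,\infty)$ under the extra uniform gradient bound $\sup_{B_R(x_i)}|\nabla u_i|\leq L$: from (1) we already have $|\nabla u_i|^2\to|\nabla u_\infty|^2$ strongly in $L^1$ (as the square of the minimal relaxed slopes, via the strong $W^{1,2}$-convergence and the fact that the energies converge). The uniform $L^\infty$-bound $|\nabla u_i|\leq L$ then lets us interpolate: $\||\nabla u_i|^p-|\nabla u_\infty|^p\|_{L^1}$ is dominated, using $|a^p-b^p|\leq p\max(a,b)^{p-1}|a-b|$ for $p\geq 1$ (and an elementary Hölder-type bound for $p<1$) together with $\max(|\nabla u_i|,|\nabla u_\infty|)\leq L$, by a constant times $\||\nabla u_i|-|\nabla u_\infty|\|_{L^1}$, and the latter goes to zero because $|\nabla u_i|\to|\nabla u_\infty|$ in $L^2$ (hence in $L^1$ on the bounded ball) — strictly, one passes through $L^2$-convergence of the slopes, which is exactly strong $W^{1,2}$-convergence, combined with the uniform bound to get convergence in every $L^p$. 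This yields (3).

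\textbf{Main obstacle.} The technical heart is step (1): making the integration-by-parts / energy-gap argument fully rigorous on the varying spaces, i.e. the simultaneous handling of (a) the cutoff near $\partial B_R$ on a possibly singular limit where one only has measure-theoretic regularity, (b) the passage $\int u_i\Delta_i u_i\to\int u_\infty\Delta_\infty u_\infty$, which requires knowing $\Delta_i u_i$ converges weakly \emph{and} $u_i$ converges strongly in $L^2$ (so the pairing of a weakly-convergent with a strongly-convergent sequence converges), and (c) the validity of the integration-by-parts formula and the Leibniz/chain rules for the limiting Sobolev calculus. All of these are available in the Cheeger-Colding / Honda / Gigli theory, but marshalling them in the right order — in particular deciding whether to extract $\Delta_i u_i\rightharpoonup w$ before or after establishing strong gradient convergence — is the delicate bookkeeping that the proof must get right. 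I would resolve it by the ``prove weak $\Delta$-convergence to an unidentified $w$ first'' route sketched above, which decouples the two difficulties.
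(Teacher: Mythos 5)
This statement is not proved in the paper at all: Theorem~\ref{thm:OH31_4} is listed among the preliminaries and quoted as a known result from Ding~\cite{8} (with closely related material in Cheeger--Colding and in Honda's work), so there is no in-paper argument to compare yours against. Judged on its own terms, your sketch is a legitimate reconstruction along the standard modern route --- Mosco convergence of the Cheeger energies along measured Gromov--Hausdorff convergence, an energy-gap (liminf/limsup) argument with good cutoffs to upgrade weak to strong $W^{1,2}$-convergence, identification of the weak $L^2$-limit of the Laplacians by pairing against strongly convergent test functions, and interpolation with the uniform gradient bound for (3) --- which is different in flavor from Ding's original argument, carried out directly with the Cheeger--Colding tools on limit spaces rather than through the Dirichlet-form/RCD formalism; both routes are viable, and yours has the advantage of reducing everything to by-now-standard functional-analytic black boxes. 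One organizational wrinkle you should fix: identifying the weak limit $w$ of $\Delta_i u_i$ with $\Delta_\infty u_\infty$ does not require the strong convergence of part (1). Pairing $\int \varphi_i\,\Delta_i u_i=-\int\langle\nabla\varphi_i,\nabla u_i\rangle$ with test functions $\varphi_i\to\varphi$ strong in $W^{1,2}$ only needs the \emph{weak} $W^{1,2}$-convergence of $u_i$, which you already have from the uniform energy bound plus $L^2$-convergence; making this explicit dissolves the apparent circularity between (1) and (2) more cleanly than the ``identify later'' bookkeeping, and it is exactly what you need inside the energy-gap step to write $\int\varphi|\nabla u_\infty|^2=-\int u_\infty\langle\nabla\varphi,\nabla u_\infty\rangle-\int\varphi\,u_\infty w$ on the limit space. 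Two smaller points: for (3) you must also record $u_i\to u_\infty$ in $L^p$ (immediate from the Lipschitz bound and local boundedness), and the convergence ``over $B(x_\infty,R)$'' should, as usual, be understood on interior balls $B_{R'}$ with $R'<R$, which is where your cutoff argument naturally lives.
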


\section{Transformation theorems}
\label{sec:transformation}

If $M$ is isometric to the standard sphere, then the first $(n+1)$-normalized eigenfunctions could be chosen as Cartesian coordinate functions
$x_i \;(i=1,2,...,n+1)$.  For any point $x$ in $M$, composing an orthogonal linear transformation $T$, we can rotate $x$ to the north pole.
For simplicity of notation, we denote the north pole by $q$.  Namely, we have
 \begin{align}
 q \coloneqq  (0,0,...,0,1) = T\tilde{f}(x).   \label{eqn:OI10_2}
 \end{align}
Now we define
 \begin{align*}
  \mathbf{F} \coloneqq ((T\tilde{f})_1,..(T\tilde{f})_n).   
 \end{align*}
 Then we have that $\mathbf{F}(x)=0$, $\nabla \mathbf{F}(x)$ is an orthogonal matrix and  $\nabla^2 \mathbf{F}_{\alpha}(x)=n I_{n}$ for each $1 \leq \alpha \leq n$. 
 Therefore, for any $\epsilon>0$, there exists a constant $\delta=\delta(n,\epsilon)$ independent of $x$ and $T$ 
 such that $\mathbf{F}|_{B(x,\delta)}$ is a generalized $\epsilon$-splitting.  
 Similar phenomenon also holds for manifolds nearby sphere.

\begin{thm}[\textbf{Transformation A}]
For any $\epsilon>0$ and $r \in (0, c_n \epsilon)$ for some small dimensional constant $c_n$,  there exists $\delta=\delta(\ep, r, n)$ with the following properties.

Suppose $(M^{n}, g)$ is a closed Riemannian manifold satisfying (\ref{eqn:OI01_4}) and $x\in M$.
Then there exists an orthogonal matrix $T \in SO(n+1, \R)$ such that 
\begin{itemize}
\item[(1).] $\displaystyle T\tilde{f}(x)=q=(0,0,...,0,1)$. 
\item[(2).] $\mathbf{F}|_{B(x,r)}$ is an $(n,\epsilon)$-splitting, where $\displaystyle  \mathbf{F} \coloneqq ((T\tilde{f})_1,..(T\tilde{f})_n)$. 
\end{itemize}
\label{thm:OH30_10}
\end{thm}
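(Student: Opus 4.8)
The plan is to prove Transformation A by a contradiction/compactness argument, using the fact (Lemma~\ref{lma:OH30_12}) that $\tilde f$ is a pointwise Gromov--Hausdorff approximation together with the pointed $W^{1,2}$-convergence theory (Theorem~\ref{thm:OH31_4}).

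> **First I would** reduce to verifying the four conditions of Definition~\ref{dfn:OH30_14} for $\mathbf{F}=((T\tilde f)_1,\dots,(T\tilde f)_n)$ on $B(x,r)$. Condition (1), $\mathbf{F}(x)=0$, is immediate once $T$ is chosen so that $T\tilde f(x)=q=(0,\dots,0,1)$, since the first $n$ coordinates of $q$ vanish; such a $T\in SO(n+1,\R)$ exists because $|\tilde f(x)|=1$. Condition (2), $\sup_{B(x,r)}|\nabla\mathbf{F}|^2\le n+\epsilon$, follows directly from the gradient bound (\ref{eqn:OH30_9}) in Lemma~\ref{lma:OH30_12}: since $|d\tilde f(v)|\le 1+\tau(\delta|n)$ for unit $v$ and $T$ is an isometry of $\R^{n+1}$, we get $|\nabla\mathbf{F}|^2\le |\nabla(T\tilde f)|^2\le n(1+\tau(\delta|n))^2\le n+\epsilon$ once $\delta$ is small. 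So the real content is conditions (3) and (4), which are integral (averaged) estimates on $\langle\nabla F_\alpha,\nabla F_\beta\rangle$ and on $|\nabla^2\mathbf{F}|^2$ over the small ball $B(x,r)$.

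> **Next I would** run the blowup. Suppose the theorem fails: then there are $\epsilon>0$, $r\in(0,c_n\epsilon)$, manifolds $(M_i,g_i)$ satisfying (\ref{eqn:OI01_4}) with $\delta_i\to 0$, and points $x_i\in M_i$ for which no orthogonal $T$ makes $\mathbf{F}|_{B(x_i,r)}$ an $(n,\epsilon)$-splitting. By (\ref{eqn:OI09_17}) and Cheeger--Colding/Colding's theorem, $(M_i,g_i,x_i)\GHconv (S^n,g_{round},x_\infty)$; choose $T_i\in SO(n+1,\R)$ rotating $\tilde f_i(x_i)$ to $q$, and after composing with the GH-approximations we may assume $x_\infty$ corresponds to the north pole. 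The $n+1$ eigenfunctions $f^i_k$ satisfy $\fint |f^i_k|^2+|\nabla f^i_k|^2+|\Delta f^i_k|^2\le C$ (using $\Delta f^i_k=-\lambda_k f^i_k$, $\lambda_k\le n+\tau$, and Lemma~\ref{lma:OH30_11}), so by Theorem~\ref{thm:OH31_4}, after passing to a subsequence, $(T_i\tilde f_i)$ converges in $W^{1,2}_{loc}$ — and, by the uniform gradient bound, in $W^{1,p}_{loc}$ — to the limit map, which by (\ref{eqn:OH30_7}) must be the standard coordinate functions on $S^n$ (the identity embedding, suitably rotated). Hence the first $n$ of them, $\mathbf{F}_\infty$, are the restricted Cartesian coordinates $(x^1,\dots,x^n)$ near the north pole, which form an exact $(n,0)$-splitting on $B(x_\infty,r)$: $\langle\nabla x^\alpha,\nabla x^\beta\rangle=\delta_{\alpha\beta}$ at the pole and $|\nabla^2 x^\alpha|$ is controlled, so on a ball of radius $r<c_n\epsilon$ the averaged quantities in (3) and (4) are below $\epsilon/2$. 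Finally, $W^{1,p}$-convergence of $\mathbf{F}_i$ to $\mathbf{F}_\infty$ passes conditions (3) and (4) to $\mathbf{F}_i=\mathbf{F}$ on $M_i$ for large $i$ (the Hessian bound (4) also follows because $\nabla^2 F^i_\alpha$ is, up to lower-order curvature terms from the Bochner identity applied to $|\nabla\phi|^2+\phi^2$, controlled in $L^2$ by (\ref{eqn:OG13_2}) and standard elliptic theory on almost-rigid balls — this is exactly the ``adapt and simplify \cite{3}'' step). This contradicts the choice of $x_i$, proving the theorem.

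> **The hard part will be** condition (4), the $L^2$-Hessian estimate $r^2\fint_{B(x,r)}|\nabla^2\mathbf{F}|^2\le\epsilon^2$, because unlike (2) and (3) it is not a direct consequence of the cited lemmas. The route I expect to work: apply the Bochner formula to each eigenfunction-combination $\phi$, integrate, and use that $\|\phi^2+|\nabla\phi|^2-1\|_p\le\tau$ from (\ref{eqn:OG13_2}) together with $\Delta\phi=-\lambda\phi$ and $\lambda=n+\tau$ to get $\fint_{B(x,R)}|\nabla^2\phi - \text{(scalar)}\cdot g|^2\le\tau$ on a fixed scale, and then the smallness of $r$ relative to $\epsilon$ forces the localized average down below $\epsilon^2/r^2$ once a mean-value/Caccioppoli-type inequality is invoked. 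One has to be careful that the Hessian estimate on $B(x,r)$ genuinely uses $r<c_n\epsilon$ and not merely $r$ small — this is where the dimensional constant $c_n$ enters. Everything else is bookkeeping with the $\tau$-notation and the standard compactness package.
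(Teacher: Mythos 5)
Your proposal is essentially correct but follows a different architecture than the paper's proof, which is entirely direct and quantitative rather than by contradiction/compactness. In the paper, condition (2) is obtained exactly as you do (from Lemma~\ref{lma:OH30_11}); condition (3) is obtained not by a limit argument but from Petersen's global $L^1$ estimate (\ref{eqn:OG13_2}) applied to $F_\alpha$ and to the polarization $\phi=(F_\alpha+F_\beta)/\sqrt{2}$, combined with the observation that $F_\alpha(x)=0$ and the gradient bound give $|F_\alpha|\leq 2r$ on $B(x,r)$, and with Bishop--Gromov to convert the global average into a local one at the cost of a factor $r^{-n}$, yielding $C\{r^2+r^{-n}\tau(\delta|n)\}$; condition (4) is proved by the very mechanism you sketch at the end: the pointwise Bochner inequality for eigenfunctions plus a cutoff with $|\Delta\varphi|\leq Cr^{-2}$ and integration by parts give $\fint_{B(x,r)}|\nabla^2 f_k|^2\leq C\{1+r^{-n-2}\tau\}$, after reducing $\nabla^2 F_\alpha$ to $\nabla^2 f_k$ through the explicit formula for $\nabla^2(f_k/|f|)$, and $c_n$ enters exactly where you guessed, via $Cc_n^2<\tfrac12$ so that the $Cr^2$ term is beaten by $\epsilon$ in (3) and by $\epsilon^2$ in (4), with $\delta$ then chosen to kill the $r^{-n}\tau$ terms. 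Your compactness route for (1)--(3) does work: the uniform Lipschitz bound, Theorem~\ref{thm:OH31_4}, and (\ref{eqn:OH30_7}) identify the limit of $T_i\tilde f_i$ as an orthogonal map of the round sphere sending the limit point to the pole, and strong $W^{1,2}$ convergence plus volume continuity passes the averaged condition (3) back to $M_i$ (note the limit is not an exact splitting but one with error $O(r^2)$, which is why $r<c_n\epsilon$ is still needed); the trade-off is a non-effective $\delta(\epsilon,r,n)$ and heavier machinery, which the paper deliberately postpones to the proof of Theorem~\ref{thm:OI07_1}. Your diagnosis of the hard point is also right, and it is the one place where the limit argument genuinely cannot carry the load: $W^{1,p}$ convergence controls nothing about Hessians, so condition (4) cannot be ``passed back'' from the limit; the parenthetical Bochner argument you give is therefore not optional but is the actual proof, and it coincides, up to the routine bookkeeping for the division by $|f|$ and for the linear combination by $T$, with Step 3 of the paper's argument.
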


 \begin{proof}
 
 Since $SO(n+1, \R)$ acts transitively on the standard sphere, it is clear that we can find a $T \in SO(n+1, \R)$ such that 
 \begin{align}
   T \tilde{f}(x)=q=(0, \cdots, 0, 1).  \label{eqn:OI01_5}
 \end{align}
Note that the choice of $T$ depends on $\tilde{f}(x)$ and may not be unique.  Then we have
\begin{align}
     & \mathbf{F}=(F_1, F_2, \cdots, F_n) \coloneqq ((T\tilde{f})_1, \cdots, (T\tilde{f})_n),   \label{eqn:OI06_3}\\
     & F_{\alpha}=T_{\alpha}^{k} \tilde{f}_k.  \label{eqn:OI06_4}
\end{align}
The coefficients $T_{\alpha}^k$ in (\ref{eqn:OI06_4}) satisfy
\begin{align}
    \sum_{k=1}^{n+1} \left( T_{\alpha}^k \right)^2=1.     \label{eqn:OI06_2}
\end{align}
Recall that $\tilde{f}_k=\frac{f_k}{|f|}$ and $|f|$ is almost $1$ by Lemma~\ref{lma:OH30_12}.   By setting $A_{\alpha}^k \coloneqq \frac{T_{\alpha}^k}{|f|}$, we have
\begin{align}
  & F_{\alpha}=A_{\alpha}^k f_k,  \label{eqn:OI06_5}\\
  & \left| \sum_{k=1}^{n+1} \left( A_{\alpha}^k \right)^2 -1 \right| \leq \tau(\delta | n).   \label{eqn:OI06_6}
\end{align}

 It follows immediately from (\ref{eqn:OI01_5}) and (\ref{eqn:OI06_3}) that  $\mathbf{F}(x)=0$.
 Therefore, condition (1) in Definition~\ref{dfn:OH30_14} holds immediately.  
 It suffices to show condition (2)-(4) in Definition~\ref{dfn:OH30_14}.  We shall do this step by step. 
 
 We let Roman letters $i,j$ run from $1$ to $n+1$, Greek letters $\alpha, \beta$ run from $1$ to $n$.
 We fix $r \in (0,1)$ and estimate the quantities in Definition~\ref{dfn:OH30_14} in the ball $B(x,r) \subset M$. 
 In this proof, $\tau=\tau(\delta | n)$. \\

 \textit{Step 1.  There holds that
 \begin{align}
     \sup_{B(x,r)}|\na \mathbf{F}|^2 \leq n+\tau.     \label{eqn:OI02_3}
 \end{align}
 }

In light of (\ref{eqn:OI06_6}),  we can apply Lemma~\ref{lma:OH30_11} to obtain
\begin{align}
   F_{\alpha}^2 + |\nabla F_{\alpha}|^2 \leq 1 + \tau. \label{eqn:OI03_3}
\end{align}
Summing the above inequality, we obtain
\begin{align*}
  |\nabla \mathbf{F}|^2=\sum_{\alpha=1}^{n} |\nabla F_{\alpha}|^2 \leq \sum_{\alpha=1}^{n} \left( |\nabla F_{\alpha}|^2  + F_{\alpha}^2 \right) < n+\tau, 
\end{align*}
which is exactly (\ref{eqn:OI02_3}). \\

 \textit{Step 2.  For each $	1 \leq \alpha \leq \beta \leq n$,  there holds that
 \begin{align}
    \fint_{B(x,r)}|\langle\na F_{\alpha}, \na F_{\beta} \rangle-\delta_{\alpha \beta}|<C \left\{ r^2 + r^{-n} \tau\right\}.       \label{eqn:OI02_4}
 \end{align}
 }
  
It is clear that (\ref{eqn:OI02_4}) is equivalent to the following two inequalities.
 \begin{align}
   &\fint_{B(x,r)} \left||\nabla F_{\alpha}|^2-1 \right|<C \left\{ r^2 + r^{-n} \tau\right\}, \quad \forall \; 1 \leq \alpha \leq n; \label{eqn:OI02_7}\\
   &\fint_{B(x,r)} \left|\langle\na F_{\alpha}, \na F_{\beta}\rangle \right|<C \left\{ r^2 + r^{-n} \tau\right\}, \quad \forall 1 \leq \alpha <\beta \leq n.   \label{eqn:OI02_8}
 \end{align}
 We first show (\ref{eqn:OI02_7}). 
 Inequality (\ref{eqn:OG13_2}) in Lemma~\ref{lma:OH30_11} implies that
\begin{align}
 \fint_M \left|{F_{\alpha}}^2+|\na F_{\alpha}|^2-1 \right|\leq \tau.   \label{eqn:OI02_9}
\end{align}
By the gradient estimate in (\ref{eqn:OI03_3}),  it is clear that
\begin{align}
   |F_{\alpha}| \leq (1+\tau)r\leq 2r, \quad \textrm{in} \;  B(x,r).     \label{eqn:OI02_10}
\end{align}
By triangle inequality,  we have
\begin{align*}
   \quad \fint_{B(x,r)} \left| |\na F_{\alpha}|^2-1 \right| 
&\leq  \fint_{B(x,r)} F_{\alpha}^2 + \fint_{B(x,r)} \left|{F_{\alpha}}^2+|\na F_{\alpha}|^2-1 \right|\\
&\leq 4r^2 + \frac{Vol(M)}{Vol(B(x,r))}\fint_M \left|{F_{\alpha}}^2+|\na F_{\alpha}|^2-1 \right|,
\end{align*}
which implies (\ref{eqn:OI02_7}) via an application of volume comparison. 

We proceed to show (\ref{eqn:OI02_8}).  In Lemma~\ref{lma:OH30_11}, we can take $\phi=\frac{F_{\alpha}+F_{\beta}}{\sqrt{2}}$ to obtain that
\begin{align*}
  \fint_M \left|\frac{{F_{\alpha}}^2+{F_{\beta}}^2+2F_{\alpha}F_{\beta}}{2}+\frac{|\na F_{\alpha}|^2+|\na F_{\beta}|^2+2\la\na F_{\alpha}, \na F_{\beta}\ra}{2}-1 \right| \leq \tau.
\end{align*}
Plugging (\ref{eqn:OI02_9}) and its version for $F_{\beta}$ into the above inequality,  we can apply triangle inequality to obtain
\begin{align*}
  \fint_M \left|F_{\alpha} F_{\beta}+ \langle \nabla F_{\alpha}, \nabla F_{\beta} \rangle \right|  \leq \tau. 
\end{align*}
Applying (\ref{eqn:OI02_10}) again, we obtain
\begin{align*}
  \fint_{B(x,r)} \left| \langle \nabla F_{\alpha}, \nabla F_{\beta} \rangle \right| &\leq \fint_{B(x,r)} \left|F_{\alpha} F_{\beta}\right| + \fint_{B(x,r)} \left|F_{\alpha} F_{\beta}+ \langle \nabla F_{\alpha}, \nabla F_{\beta} \rangle \right| \\
     &\leq 4r^2 + \frac{Vol(M)}{Vol(B(x,r))} \fint_{M} \left|F_{\alpha} F_{\beta}+ \langle \nabla F_{\alpha}, \nabla F_{\beta} \rangle \right|\\
     &\leq C \left\{ r^2 + r^{-n} \tau\right\}, 
\end{align*}
which is nothing but  (\ref{eqn:OI02_8}).  Therefore, both (\ref{eqn:OI02_7}) and (\ref{eqn:OI02_8}) are proved. We finish the proof of Step 2. \\

 \textit{Step 3. For each $1 \leq \alpha \leq n$,   there holds that
 \begin{align}
     r^2\fint_{B(x,r)} \left|\na^2 F_{\alpha} \right|^2< C \left\{ r^2 + r^{-n} \tau \right\}.     \label{eqn:OI02_5}
 \end{align}
 }
 
Recall that
\begin{align*}
 \tilde{f}_{\alpha}=\frac{f_{\alpha}}{|f|}, \quad   \na \tilde{f}_{\alpha}=\frac{\na f_{\alpha}}{|f|}-\frac{f_{\alpha} f_k\na f_k}{|f|^3}. 
\end{align*}
Direct calculation yields that
\begin{align}
&\quad \; \na^2 \tilde{f}_{\alpha} \notag\\
&=\frac{\na^2 f_{\alpha}}{|f|}-\frac{f_k \left\{\na f_k\otimes \na f_{\alpha}  + \nabla f_{\alpha} \otimes \nabla f_k \right\}}{|f|^3}-\frac{f_{\alpha} \na f_k\otimes \na f_k}{|f|^3}  
  -\frac{f_{\alpha}f_k\na^2 f_k}{|f|^3}+\frac{3f_{\alpha}f_k f_l \na f_k \otimes \na f_l}{|f|^5}.
\label{eqn:OI08_7}  
\end{align} 
Note that $f_{k}^2+|\nabla f_k|^2 \leq 1+\tau$ by Lemma~\ref{lma:OH30_11}.  Thus we have
\begin{align}
    \left| \na^2 \tilde{f}_{\alpha} \right| \leq C \left\{ 1+ \sum_{k=1}^{n+1} |\nabla^2 f_k|^2 \right\}.
     \label{eqn:OI06_1}
\end{align}
Recall that $F_{\alpha}$ is a linear combination of $\tilde{f}_{k}$ with coefficients satisfying (\ref{eqn:OI06_4}) and (\ref{eqn:OI06_2}).  It follows that
\begin{align*}
    \left| \na^2 F_{\alpha} \right|=\left| T_{\alpha}^k \nabla^2 \tilde{f}_k\right|^2 \leq \sum_{k=1}^{n+1} \left( T_{\alpha}^k \right)^2 \left| \nabla^2 \tilde{f}_k \right|^2
    \leq C \left\{ 1+ \sum_{k=1}^{n+1} |\nabla^2 f_k|^2 \right\}, 
\end{align*}
whose integration yields that
\begin{align}
   \fint_{B(x,r)} \left|\na^2 F_{\alpha} \right|^2 \leq C \left\{  1 + \fint_{B(x,r)} \sum_{k=1}^{n+1}|\nabla^2 f_k|^2 \right\}.  
  \label{eqn:OI03_2} 
\end{align}
Applying the Bochner formula, the eigenfunction and the Ricci curvature condition, we have
\begin{align}
 \frac{1}{2} \Delta (|\na f_k|^2)&=|\na^2 f_k|^2+\la \na f_k, \na \Delta f_k\ra+Rc(\na f_k, \na f_k) \notag\\
  &\geq |\na^2 f_k|^2 + \{ n-1-\lambda_k\} |\nabla f_k|^2.  \label{eqn:OI02_11}
\end{align}
Recall that $f_k^2+|\nabla f_k|^2 \leq 1+\tau$ by Lemma~\ref{lma:OH30_11}, and $1 \leq \lambda_k -n+1\leq 1+ \tau$ by Theorem~\ref{thm:OH30_5}. 
Thus we have
\begin{align*}
 |\na^2 f_k|^2  \leq \frac{1}{2}\Delta (|\na f_k|^2) + (1+ \tau).
\end{align*}
Take a cutoff function $\varphi$ supported on $B(x,4r)$ such that
\begin{align*}
  \varphi|_{B(x,2r)}=1, \quad |\na \varphi|<Cr^{-1}, \quad |\Delta \varphi|<Cr^{-2}. 
\end{align*}
Then we have
\begin{align*}
  &\quad \fint_{B(x,r)} |\na^2 f_k|^2 \\
  &\leq \fint_{B(x,r)} \varphi|\na^2 f_k|^2
   \leq Cr^{-n}\int_M \left\{\frac{1}{2}\Delta (|\na f_k|^2 \right\} \varphi+\fint_{B(x,4r)} \left\{1+ \tau \right\}\varphi\\
  &= Cr^{-n}\int_M\frac{1}{2} \left\{|\na f_k|^2-1 \right\} \Delta \varphi+ \fint_{B(x,4r)} \left\{ 1+ \tau\right\} \varphi\\
  &\leq C \left\{ 1 + r^{-n-2} \tau \right\}. 
 \end{align*}
 Plugging the above inequality into (\ref{eqn:OI03_2}) and multiplying both sides by $r^2$, we arrive at (\ref{eqn:OI02_5}).
 This finishes the proof of Step 3.\\
 
 In (\ref{eqn:OI02_4}) and (\ref{eqn:OI02_5}), the constant $C$ depends only on $n$.
 Let $c_n$ be a small constant such that $C \cdot c_n^2<\frac{1}{2}$. 
 Now fix $r \in (0, c_n \epsilon)$ and $\delta$ sufficiently small such that $C r^{-n}\tau(\delta | n)<0.5\epsilon$, we see that 
 (\ref{eqn:OI02_3}), (\ref{eqn:OI02_4}) and (\ref{eqn:OI02_5}) make sure the conditions (2), (3) and (4) in Definition~\ref{dfn:OH30_14} hold.
 Therefore, $\mathbf{F}|_{B(x,r)}$ is an $\epsilon$-splitting.   The proof of Theorem~\ref{thm:OH30_10} is complete. 
 \end{proof}

In the following theorem, we shall show that the $(n, \epsilon)$-splitting property is preserved in any smaller scale, up to modifying $\epsilon$ and using a lower triangular matrix transformation.
This theorem is inspired by the geometric transformation theorem in \cite{3}.
However, our argument is much simpler in the current special setting, and we also need to do necessary modifications as now we are considering eigenfunctions rather than harmonic functions.

\begin{thm}[\textbf{Transformation B}]
For each small $\eta$, there exists an $\epsilon_0=\epsilon_0(\eta, n)$ with the following properties. 

Suppose $x \in M$ and $(M^n, g)$ is a closed Riemannian manifold satisfying (\ref{eqn:OI01_4}). 
Suppose $\epsilon \in (0, \epsilon_0)$ and $r= c_n \epsilon$.  
Suppose $\mathbf{F}$ is chosen as in Theorem~\ref{thm:OH30_10} and $\delta$ is sufficiently small such that $\mathbf{F}|_{B(x,r)}$ is an $(n,\epsilon)$-splitting. 
Then for each $s \in (0, r)$,  there exists a lower triangular $(n \times n)$-matrix $T_s$ with positive diagonal entries such that 
\begin{itemize}
\item[(1).] $T_s  \mathbf{F} : B(x, r) \to \mathbb{R}^n$ is an $(n, \eta)$-splitting map on $B(x, s)$;
\item[(2).] $\fint_{B(x,s)} \langle\nabla(T_s \mathbf{F})_{\alpha}, \nabla(T_s \mathbf{F})_{\beta} \rangle=\delta_{\alpha \beta}$;
\item[(3).] $\left|T_s\circ T_{2s}^{-1}-I \right|\leq \eta$.
\end{itemize}

\label{thm:OI07_1}
\end{thm}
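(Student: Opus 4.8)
The plan is to follow the classical Cheeger-Colding transformation scheme adapted to the present eigenfunction setting, as the authors announce. The core mechanism is the following dichotomy at each scale: given an $(n,\epsilon)$-splitting on $B(x,s)$, I first symmetrize it. Precisely, consider the symmetric positive-definite matrix $B_s$ with entries $(B_s)_{\alpha\beta}=\fint_{B(x,s)}\langle\nabla\mathbf{F}_\alpha,\nabla\mathbf{F}_\beta\rangle$; by property (3) of Definition~\ref{dfn:OH30_14} this is within $\epsilon$ of the identity, so its inverse square root $B_s^{-1/2}$ is within $C\epsilon$ of $I$. Then $B_s^{-1/2}\mathbf{F}$ has Gram matrix exactly the identity on $B(x,s)$, giving property (2) of the theorem, and its gradient bound and Hessian bound are only mildly deteriorated (the factor $|B_s^{-1/2}|\le 1+C\epsilon$ multiplies everything). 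Replacing $B_s^{-1/2}$ by the lower-triangular $T_s$ coming from its Gram-Schmidt/Cholesky decomposition changes nothing essential since $T_s$ is also $C\epsilon$-close to $I$ and produces the same Gram matrix; I keep it lower-triangular so that the matrices compose nicely down the scales. So the only real content is controlling how the \emph{off-diagonal} and \emph{average} behaviour of the gradients scales as $s\to 0$, i.e.\ showing that after the correction the map is an $(n,\eta)$-splitting on the smaller ball, with $\eta$ small once $\epsilon_0$ is small.

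The heart of the argument is a contradiction/blowup argument to establish an effective scale-reduction estimate. Suppose the claim fails: there is a fixed $\eta_0$, a sequence $\epsilon_i\to 0$, manifolds $M_i$ satisfying $(\dagger)$ with $\delta_i\to 0$, points $x_i$, scales $s_i\in(0,r_i)$ with $r_i=c_n\epsilon_i$, and $(n,\epsilon_i)$-splittings $\mathbf{F}_i$ on $B(x_i,r_i)$ such that no lower-triangular correction makes $T_{s_i}\mathbf{F}_i$ an $(n,\eta_0)$-splitting on $B(x_i,s_i)$. Rescale the metric by $s_i^{-2}$ and rescale the maps correspondingly, so that the balls become unit balls; the Ricci lower bound becomes $Rc\ge -(n-1)s_i^2\to 0$, the gradient bounds persist, and the rescaled maps $\hat{\mathbf{F}}_i=s_i^{-1}\mathbf{F}_i$ are harmonic-like — more precisely, since $f_k$ is a $\lambda_k$-eigenfunction and $\lambda_k\approx n$, the rescaled $\hat f_k$ satisfy $\Delta\hat f_k=-s_i^2\lambda_k\hat f_k\to 0$ in $L^2$, so in the limit the maps become genuinely harmonic. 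By the compactness theory (Cheeger-Colding, together with Theorem~\ref{thm:OH31_4} on $W^{1,2}$-convergence), pass to a pointed Gromov-Hausdorff limit $(M_i,s_i^{-2}g_i,x_i)\to(X,d,x_\infty)$, which is a metric measure space with a generalized nonnegative Ricci bound, and the $\hat{\mathbf{F}}_i$ converge (after subtracting constants to fix $\hat{\mathbf{F}}_i(x_i)=0$) in $W^{1,2}_{loc}$ to a harmonic map $\mathbf{u}_\infty:X\to\mathbb{R}^n$. The Hessian bound $r^2\fint_{B(x,r)}|\nabla^2\mathbf F|^2\le\epsilon^2$ rescales, and since $s_i\le r_i$, the Hessian energy on the unit ball is controlled — in fact, using the Bochner inequality integrated against a cutoff (exactly as in Step 3 of the proof of Transformation~A, where the $L^2$ Hessian of the rescaled eigenfunctions is bounded in terms of the rescaled eigenvalue and the gradient bound), one gets that the limit map has vanishing Hessian, hence is an isometric splitting map: $X$ splits off an $\mathbb{R}^n$-factor isometrically along $\mathbf{u}_\infty$, and because $n=\dim M$ this forces $X=\mathbb{R}^n$ and $\mathbf{u}_\infty$ to be (an affine image of) the identity. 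Then the limiting Gram matrix is constant and symmetric positive-definite, so $B_{s_i}$ converges to an honest positive-definite matrix, and the Cholesky correction $T_{s_i}$ converges to the corresponding limit; the corrected maps $T_{s_i}\hat{\mathbf{F}}_i$ converge to a standard coordinate splitting of $\mathbb{R}^n$, which \emph{is} an $(n,0)$-splitting on the unit ball. By $W^{1,2}$ (indeed $W^{1,p}$, via part (3) of Theorem~\ref{thm:OH31_4} using the uniform gradient bound) lower semicontinuity of the splitting defects, $T_{s_i}\hat{\mathbf{F}}_i$ is an $(n,\eta_0)$-splitting on $B(x_i,1)$ for large $i$ — rescaling back, this contradicts the failure assumption. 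This yields (1) and, by construction of $T_s$ via Cholesky of $B_s$, also (2).

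For item (3), the comparison $|T_s\circ T_{2s}^{-1}-I|\le\eta$, the point is that by (1) applied at scales $s$ and $2s$ respectively, both $T_s\mathbf F$ and $T_{2s}\mathbf F$ are $(n,\eta)$-splittings on $B(x,s)$ (the latter because an $(n,\eta)$-splitting on $B(x,2s)$ restricts, with controlled loss absorbed into the choice of $\eta$, to one on $B(x,s)$ — this uses volume comparison to pass the integral defects from the larger to the smaller ball, exactly the $r^{-n}$ trick in Step 2 of Transformation~A). Writing $T_s\mathbf F=(T_s\circ T_{2s}^{-1})(T_{2s}\mathbf F)$, the matrix $S:=T_s\circ T_{2s}^{-1}$ conjugates one near-orthonormal frame of gradients to another near-orthonormal frame on $B(x,s)$: testing the identity $\fint_{B(x,s)}\langle\nabla(T_s\mathbf F)_\alpha,\nabla(T_s\mathbf F)_\beta\rangle=\delta_{\alpha\beta}$ (property (2) at scale $s$) against $S(B_{2s}^{(s)})S^{T}$ where $B_{2s}^{(s)}$ is the Gram matrix of $T_{2s}\mathbf F$ on $B(x,s)$ — which is $\eta$-close to $I$ — forces $SS^T$ to be $C\eta$-close to $I$; since $S$ is lower-triangular with positive diagonal, $SS^T\approx I$ implies $S\approx I$ by a standard linear-algebra argument (Cholesky factors of a matrix near $I$ are near $I$). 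Absorbing the various dimensional constants $C(n)$ into a redefinition of $\eta$ at the outset (i.e.\ running the contradiction argument with a threshold $\eta/C(n)$) gives the stated bound.

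The main obstacle I expect is making the blowup limit rigorous: one must ensure the rescaled eigenfunctions really do converge to harmonic functions with vanishing Hessian (controlling the Hessian energy uniformly under rescaling via the Bochner argument, as in Step 3 above, since the naive Hessian bound in the definition only gives $\epsilon^2$ at scale $r$ and one is descending to scale $s\ll r$), and that the limit space, carrying $n$ independent harmonic functions with parallel unit-length mutually-orthogonal gradients and being $n$-dimensional, is forced to be $\mathbb R^n$ with $\mathbf u_\infty$ the identity map — this is where one invokes the splitting theorem in the Cheeger-Colding generalized-Ricci setting and the dimension-rigidity that the number of splitting functions equals $\dim M$ (cf.\ Lemma~\ref{lma:OI10_1} and its provenance). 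Everything else — the symmetrization, the Cholesky bookkeeping, the volume-comparison transfer of defects between concentric balls, and the lower-semicontinuity of the splitting conditions under $W^{1,p}$-convergence — is routine once that convergence picture is in place.
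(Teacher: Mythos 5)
There is a genuine gap at the heart of your contradiction argument: you blow up the \emph{uncorrected} map $\hat{\mathbf F}_i=s_i^{-1}\mathbf F_i$ and then assert that ``the limiting Gram matrix is constant and symmetric positive-definite, so $B_{s_i}$ converges \ldots and the Cholesky correction $T_{s_i}$ converges.'' Nothing in your setup justifies positive-\emph{definiteness}: the limit of the uncorrected maps is at best a linear map of norm $\le\sqrt{n}$ on the blow-up, and it may be degenerate --- a priori the gradients $\nabla F_\alpha$ can become nearly linearly dependent as the scale shrinks, which is exactly why the correction matrices are allowed to grow like $(s/\bar{s})^{C\sqrt{\eta}}$ in (\ref{eqn:OI07_5}) and why the bi-Lipschitz estimate ultimately fails (cf.\ Proposition~\ref{prn:OI19_1}). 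If $B_{s_i}$ degenerates, then $T_{s_i}$ blows up, the defects of $T_{s_i}\hat{\mathbf F}_i$ are those of $\hat{\mathbf F}_i$ amplified by $|T_{s_i}|^2$, and no contradiction follows from convergence of the uncorrected maps; assuming nondegeneracy is essentially assuming the conclusion (your derivation of $X=\mathbb{R}^n$ from the splitting functions has the same circularity --- the paper gets it from almost-maximal volume, Step 4). The paper supplies precisely the missing mechanism: it selects a \emph{critical} scale $\bar{s}$ (exact transformations exist on every scale in $(\bar{s},r)$ and first fail at $\bar{s}$), proves the one-step propagation bound $|T_sT_{2s}^{-1}-I|\le C\sqrt{\eta}$ above $\bar{s}$ via uniqueness of the QR decomposition (Step 1), iterates it to (\ref{eqn:OI07_5}) (Step 2), and then blows up the \emph{corrected} map $\bar{s}^{-1}T_{\bar{s}}\mathbf F$, for which (\ref{eqn:OI07_5}) yields the growth estimate (\ref{eqn:OI07_10}); only then does the Liouville argument identify the limit as an orthogonal linear map and contradict the criticality of $\bar{s}$. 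Your proposal has no critical-scale selection and no control of $T_s$ across scales, so the blow-up cannot close.

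A secondary problem: your fix for the Hessian term at scales $s\ll r$ (``Bochner as in Step 3 of Transformation A'') does not work there. That argument gives $s^2\fint_{B(x,s)}|\nabla^2 f_k|^2\le C\{s^2+s^{-n}\tau(\delta|n)\}$, which is uncontrolled once $s$ is small compared with a power of $\tau(\delta|n)$, and in your contradiction sequence $s_i$ may tend to zero arbitrarily fast relative to $\delta_i$; likewise your remark that the Hessian bound ``rescales'' from $r_i$ to $s_i$ misses the factor $(r_i/s_i)^{n-2}$ from volume comparison, cf.\ (\ref{eqn:OI07_3z}). In the limit one can recover linearity from Liouville rather than from a Hessian bound, but to conclude that $T_{s_i}\mathbf F_i$ is an $(n,\eta_0)$-splitting at scale $s_i$ you still must verify condition (4) of Definition~\ref{dfn:OH30_14} at that scale, and neither $W^{1,2}$ nor $W^{1,p}$ convergence of gradients delivers it; the paper needs this only at the critical scale, where the exact transformations at the scales just above $\bar{s}$ are available (cf.\ (\ref{eqn:OI09_10})). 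Your construction of $T_s$ by Cholesky/QR normalization (item (2)) and the linear-algebra derivation of item (3) from (1)--(2) at scales $s$ and $2s$ are fine and parallel the paper's Step 1, but they do not repair the main gap.
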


\begin{proof}

We call the transformation matrix $T_s$ with the properties (1)-(3) in Theorem~\ref{thm:OI07_1} as \textit{exact transformation}.
In order to prove the theorem, we need to show that  the exact transformation exists on each scale $s \in (0, r)$.

We argue by contradiction. 
If the theorem were wrong, we can find an $s \in (0, r)$ such that exact transformation does not exist on the scale $s$.
By the fact $\epsilon<<\eta$, it is clear that on the scale $r$, the exact transformation exists. 
Therefore, via a point-selecting process, we can find a critical scale $\bar{s}$ such that exact transformation exists 
for each scale $s \in [\bar{s}, r)$ and starts to fail to exist on the scale $\bar{s}$. \\

\textit{Step 1. For each $s \in (\bar{s}, r)$, we have 
\begin{align}
 \max\left\{|T_{s}\circ T_{2s}^{-1}-I|, \; |T_{2s}^{-1}\circ T_{s}-I| \right\}\leq C\sqrt{\eta}.   \label{eqn:OI07_2}
\end{align}
}

By condition (1) in the definition of $T_s$ and standard volume comparison,  we have
\begin{align}
 &\quad \left|\fint_{B(x, s)} \langle\na(T_{2s} \mathbf{F})_{\alpha}, \na(T_{2s} \mathbf{F})_{\beta}\rangle-\delta_{\alpha \beta} \right| \notag\\
 &\leq \fint_{B(x, s)} \left|\langle\na(T_{2s} \mathbf{F})_{\alpha}, \na(T_{2s} \mathbf{F})_{\beta}\rangle-\delta_{\alpha \beta} \right| \leq C \fint_{B(x,2s)} \left|\langle\na(T_{2s} \mathbf{F})_{\alpha}, \na(T_{2s} \mathbf{F})_{\beta}\rangle-\delta_{\alpha \beta} \right| \notag\\
 &\leq C \sqrt{\eta}.
\end{align}
Thus we can find a lower triangular matrix $U_{2s}$ such that
\begin{align}
& |U_{2s}-I|\leq C \sqrt{\eta}  \label{eqn:OI07_3}\\
& \fint_{B(x,s)}\left\langle\na(\widetilde{T}_{2s} \mathbf{F})_{\alpha},\,(\widetilde{T}_{2s} \mathbf{F})_{\beta} \right\rangle=\delta_{\alpha \beta},   \label{eqn:OI13_1}
\end{align}
where $\widetilde{T}_{2s}:=U_{2s} T_{2s}$. 
From condition (2) in the definition of $T_s$,  we know that
\begin{align}
\fint_{B(x,s)} \left\langle\na(T_{s} \mathbf{F})_{\alpha},\,(T_{s} \mathbf{F})_{\beta} \right\rangle=\delta_{\alpha \beta}.  \label{eqn:OI13_2}
\end{align}
Note that the $QR$-decomposition of the matrix $\fint_{B(x,{s})}\langle\na F_{\alpha}, \na F_{\beta} \rangle$ is unique, when we require the diagonal entries of the lower-triangular matrix are all positive. 
Thus it follows from (\ref{eqn:OI13_1}) and (\ref{eqn:OI13_2}) that $T_{s}=\widetilde{T}_{2s}$, which is equivalent to 
\begin{align}
   U_{2s}=T_{s}T_{2s}^{-1}.    \label{eqn:OI07_4}
\end{align}
Plugging (\ref{eqn:OI07_4}) into (\ref{eqn:OI07_3}), we obtain 
\begin{align}
|T_{s}\circ T_{2s}^{-1}-I|\leq C\sqrt{\eta}.   \label{eqn:OI07_6}
\end{align}
Composing $T_{2s}^{-1}$ and $T_{2s}$ on both sides, we obtain
\begin{align}
|T_{2s}^{-1}\circ T_{s}-I|\leq C\sqrt{\eta}.   \label{eqn:OI07_7}
\end{align}
It is clear that (\ref{eqn:OI07_2}) follows from the combination of (\ref{eqn:OI07_6}) and (\ref{eqn:OI07_7}). \\

\textit{Step 2. For each $s \in (\bar{s}, r)$, we have 
\begin{align}
  \max\left\{ \left|T_{s}^{-1}\circ T_{\bar{s}}\right|, \left| T_{\bar{s}} \circ T_{s}^{-1}\right| \right\} \leq  C \left\{ 1+ \left( \frac{s}{\bar{s}} \right)^{C \sqrt{\eta}} \right\}. \label{eqn:OI07_5}
\end{align}
}

Elementary linear algebra yields that 
\begin{align}
  |AB-I|\leq |A-I|+|B-I|+n|A-I|\cdot|B-I|    \label{eqn:OI07_8}
\end{align}
for every two $(n \times n)$-matrices $A$ and $B$.  
Note that
\begin{align*}
T_{s}\circ T_{2^l s}^{-1}=(T_{s}\circ T_{2^1 s}^{-1})\circ(T_{2^1 s}\circ T_{2^2s}^{-1})\cdots (T_{2^{l-1}s}\circ T_{2^l s}).
\end{align*}
Repeatedly applying (\ref{eqn:OI07_8}) on the above inequality, we obtain 
\begin{align*}
  \left|T_{s}^{-1}\circ T_{2^l s}-I \right|\leq \left\{1+(n+1)C\sqrt{\eta} \right\}^l-1, 
\end{align*}
which implies immediately that
\begin{align*}
  \left|T_{s}^{-1}\circ T_{\bar{s}} -I \right| \leq C \left\{ 1+ \left( \frac{s}{\bar{s}} \right)^{C \sqrt{\eta}} \right\}.
\end{align*}
Composing matrices in another direction, similar argument implies that
\begin{align*}
 \left| T_{\bar{s}} \circ T_{s}^{-1}-I\right|  \leq C \left\{ 1+ \left( \frac{s}{\bar{s}} \right)^{C \sqrt{\eta}} \right\}.
\end{align*}
It is clear that (\ref{eqn:OI07_5}) follows from the combination of the previous two inequalities. \\

\textit{Step 3.   We have
\begin{align}
      \frac{\bar{s}}{r} \leq \tau(\epsilon | n).  \label{eqn:OI08_3}
\end{align}
}

Note that $\mathbf{F}|_{B(x,r)}$ is an $\epsilon$-splitting, and $T_{\bar{s}} \mathbf{F}|_{B(x, \bar{s})}$ is an $(n,\eta)$-splitting for some $(n \times n)$-matrix $T$.
It follows from Definition~\ref{dfn:OH30_14} and volume comparison that
\begin{align}
 &\sup_{B(x, {s})} |\nabla \mathbf{F}|^2 \leq \sup_{B(x, \bar{s})} |\nabla \mathbf{F}|^2 \leq n + \epsilon;\label{eqn:OI07_1z}\\ 
 &\fint_{B(x,{s})}|\langle\na F_{\alpha}, \na F_{\beta} \rangle-\delta_{\alpha\beta}|< C \left(\frac{r}{{s}} \right)^n \fint_{B(x,r)}|\langle\na F_{\alpha}, \na F_{\beta} \rangle-\delta_{\alpha\beta}|<C \left(\frac{r}{{s}} \right)^n  \epsilon, \quad \forall \; 1 \leq \alpha \leq \beta \leq n;\label{eqn:OI07_2z}\\ 
 &{s}^2\fint_{B(x,{s})} |\na^2 \mathbf{F}|^2<C \left(\frac{r}{\bar{s}}\right)^{n-2} r^2\fint_{B(x,r)} |\na^2 \mathbf{F}|^2<C \left(\frac{r}{{s}}\right)^{n-2}  \epsilon^2 \label{eqn:OI07_3z} .
\end{align}
Note that $\epsilon<<\eta$.
Similar to the deduction of (\ref{eqn:OI07_3}), we can apply the estimate (\ref{eqn:OI07_2z}) to obtain a lower triangular matrix $U_{s}$ such that 
\begin{align}
|U_{s}-I|\leq C \left(\frac{r}{{s}} \right)^n  \epsilon, \label{eqn:OI07_4z}
\end{align} 
and the condition (2) in Theorem \ref{thm:OI07_1} is satisfied using $U_{s}$ as $ T_{s}$. To make sure that $U_{{s}}\mathbf{F}$ is an $(n,\eta)$-splitting and satisfies all the assumptions in  Theorem \ref{thm:OI07_1}, from (\ref{eqn:OI07_1z}), (\ref{eqn:OI07_2z}), (\ref{eqn:OI07_3z}) and (\ref{eqn:OI07_4z}), it suffices to have that $C \left(\frac{r}{{s}} \right)^{3n}  \epsilon^4<\eta$. That is, such transformation exists for scales $\frac{s}{r}\geq \tau(\epsilon | n)$. This guarantees that the critical scale satisfies $ \frac{\bar{s}}{r} \leq \tau(\epsilon | n)$, which is (\ref{eqn:OI08_3}).\\

\textit{Step 4.   Rescale the metric as
\begin{align}
  \bar{g} \coloneqq \bar{s}^{-2} g. 
 \label{eqn:OI08_6} 
\end{align}
Then for each fixed large constant $L$, we have
\begin{align}
     \frac{|B_{\bar{g}}(x, L)|_{\bar{g}}}{\omega_n L^n}   \geq 1 - \tau(\epsilon|n,L).  \label{eqn:OI08_4}
\end{align}
}

Since $Rc \geq n-1$, it follows from Bishop-Gromov volume comparison that 
\begin{align}
     \frac{|B(x, L\bar{s})|}{ n\omega_n \int_{0}^{L\bar{s}} \sin^{n-1} \theta d\theta} \geq  \frac{|M|}{(n+1) \omega_{n+1}} \geq 1-\frac{\delta}{(n+1) \omega_{n+1}}.   \label{eqn:OI08_5}
\end{align}
As $\bar{s} \in (0, c_n \epsilon)$ and $\delta \to 0$ as $\epsilon \to 0$,  it follows that
\begin{align*}
  \frac{|B(x, L\bar{s})|}{\omega_n (L\bar{s})^n} \geq \frac{n\omega_n \int_{0}^{L\bar{s}} \sin^{n-1} \theta d\theta}{\omega_n (L\bar{s})^n} \cdot \left\{ 1-\frac{\delta}{(n+1) \omega_{n+1}}\right\}
  \geq 1 - \tau(\epsilon|n,L), 
\end{align*}
which is equivalent to (\ref{eqn:OI08_4}). \\

\textit{Step 5.   Rescale the function as
\begin{align}
  \bar{\mathbf{F}} \coloneqq \bar{s}^{-1} T_{\bar{s}} \left\{ \mathbf{F}- \mathbf{F}(x) \right\}. 
 \label{eqn:OI07_9} 
\end{align}
 Then we have
 \begin{align}
    |\nabla_{\bar{g}} \bar{\mathbf{F}}|_{\bar{g}}(y) \leq C \left\{ 1+\rho^{C\sqrt{\eta}} \right\}
 \label{eqn:OI07_10}   
 \end{align}
 where $\rho=d_{\bar{g}}(y, x)$. 
}

Recall that $\mathbf{F}(x)=0 \in \R^{n}$.  It is clear that
\begin{align}
  \bar{\mathbf{F}} \coloneqq \bar{s}^{-1} T_{\bar{s}} \mathbf{F}=\bar{s}^{-1}  \left\{ T_{\bar{s}} \circ T_{\rho}^{-1} \right\} \circ T_{\rho} \mathbf{F}. 
  \label{eqn:OI08_1}
\end{align}
Note that $T_{\rho} \mathbf{F}$ is an $(n,\eta)$-splitting map on the scale $\rho$, it follows from item (2) in Definition~\ref{dfn:OH30_14} that
\begin{align}
   \left| \nabla (T_{\rho} \mathbf{F}) \right|= \left| \nabla_{g} (T_{\rho} \mathbf{F}) \right|_{g} \leq \sqrt{n+\eta} \leq \sqrt{2n} \leq C. 
  \label{eqn:OI08_2}
\end{align}
Plugging (\ref{eqn:OI07_5}) and (\ref{eqn:OI08_2}) into (\ref{eqn:OI08_1}), we obtain 
\begin{align*}
  \left| \nabla \bar{\mathbf{F}} \right| \leq C\bar{s}^{-1} \left\{ 1+ \left( \frac{s}{\bar{s}} \right)^{C \sqrt{\eta}} \right\}.
\end{align*}
In light of (\ref{eqn:OI07_9}), it is clear that (\ref{eqn:OI07_10}) is equivalent to the above inequality. \\

\textit{Step 6.    We have
\begin{align}
   \left|\Delta_{\bar{g}} \bar{\mathbf{F}} + \bar{\mathbf{F}} \right| \leq \tau(\epsilon|n). 
 \label{eqn:OI08_8} 
\end{align}
}

It follows from (\ref{eqn:OI08_7}) that
\begin{align*}
        \Delta \tilde{f}_{\alpha}=\frac{\Delta f_{\alpha}}{|f|} - \frac{\langle \nabla f_{\alpha}, \nabla |f|^2\rangle}{|f|^3}-\frac{f_k \Delta f_k}{|f|^3} f_{\alpha} + \frac{2|\nabla f|^2}{|f|^3} f_{\alpha}.  
\end{align*}
Note that
\begin{align*}
   \left| \nabla |f|^2 \right| \leq \sum_{\beta=1}^{n} |f_{\beta}||\nabla f_{\beta}| + |f_{n+1}||\nabla f_{n+1}| \leq \tau(\epsilon| n). 
\end{align*}
Combining the previous two steps, we obtain
\begin{align*}
   \left| \Delta \tilde{f}_{\alpha} + n \tilde{f}_{\alpha}\right| \leq \tau(\epsilon | n), \quad 1 \leq \alpha \leq n, 
\end{align*}
which implies that
\begin{align*}
  \left| \Delta \mathbf{F}+ n \mathbf{F} \right| \leq \tau(\epsilon | n). 
\end{align*}
It is clear that (\ref{eqn:OI08_8}) is equivalent to the above inequality by rescaling. \\

\textit{Step 7.    Derive the desired contradiction. 
}

For each small $\epsilon$, there exists $\bar{s} \in (0, c_n \epsilon)$ as a critical scale such that exact transformation starts to fail to exist on this scale.
In light of (\ref{eqn:OI08_4}), we can apply the Cheeger-Colding theory to obtain that
\begin{align}
        (M, x, \bar{g})  \longright{pointed-Gromov-Hausdorff}  (\hat{M}, \hat{x}, \hat{g}) \cong (\R^n, 0, g_{E}),     \label{eqn:OI09_1}
\end{align}
as $\epsilon \to 0$.  By the fact $\bar{\mathbf{F}}(x)=0$, the gradient estimate (\ref{eqn:OI07_10}),  and the Laplacian estimate (\ref{eqn:OI08_8}), we can apply Theorem~\ref{thm:OH31_4} to obtain that
\begin{align}
      &\bar{\mathbf{F}}   \stackrel{W_{loc}^{1,2}}{\longrightarrow} \hat{\mathbf{F}};  \label{eqn:OI09_2} \\
      &|\nabla_{\hat{g}} \hat{\mathbf{F}}|_{\hat{g}}(y) \leq C \left\{ 1+|y|^{C\sqrt{\eta}} \right\}, \quad \forall \; y \in \R^{n}; \label{eqn:OI09_3}\\
     &\Delta_{\hat{g}}  \hat{\mathbf{F}}=0, \; \textrm{in the distribution sense}.  \label{eqn:OI09_4}
\end{align}
The harmonicity of $\hat{\mathbf{F}}$ implies that $\hat{\mathbf{F}}$ is smooth,
thus we know $\displaystyle \Delta_{\hat{g}}  \hat{\mathbf{F}}=0$ in the traditional sense. 
Since the limit space is Euclidean, we can assume $y=(y^1, \cdots, y^n)$ being the coordinates.
Then it is clear that 
\begin{align*}
  \nabla_{\hat{g}} \hat{\mathbf{F}}= \left( \frac{\partial \hat{\mathbf{F}}}{\partial y^1}, \cdots,  \frac{\partial \hat{\mathbf{F}}}{\partial y^n} \right).
\end{align*}
Each component will also be harmonic and satisfies sublinear growth condition as $C\sqrt{\eta}<1$.   Then the proof of Liouville's theorem applies and we know that
\begin{align}
  \frac{\partial \hat{\mathbf{F}}}{\partial y^{\alpha}}=b_{\alpha}, \; \forall \; 1 \leq \alpha \leq n.    \label{eqn:OI09_11}
\end{align}
This means that $\hat{\mathbf{F}}$ is the linear function
\begin{align*}
    \hat{\mathbf{F}}(y)=\left(b_1 y^{1}, b_2 y^{2}, \cdots, b_n y^{n} \right). 
\end{align*}
By the choice of $\bar{s}$,  we know that $\bar{\mathbf{F}}$ is an $(n,\eta)$-splitting on scale $1$ and it satisfies
\begin{align}
 \fint_{B_{\bar{g}}(x, 1)} \left\{ \langle\na \bar{F}_{\alpha}, \na \bar{F}_{\beta} \rangle-\delta_{\alpha \beta} \right\} =0,    \label{eqn:OI09_7}
\end{align}
whose limit is
\begin{align}
  \fint_{B_{\hat{g}}(0, 1)} \left\{ \langle\na \hat{F}_{\alpha}, \na \hat{F}_{\beta} \rangle-\delta_{\alpha \beta} \right\}=b_{\alpha} b_{\beta} -\delta_{\alpha \beta}=0.  
  \label{eqn:OI09_12}
\end{align}
Thus $b_{\alpha}=\pm 1$ for each $\alpha \in \{1, 2, \cdots, n\}$.  
Consequently, it follows from (\ref{eqn:OI09_11}) and (\ref{eqn:OI09_12}) that 
\begin{align*}
  \fint_{B_{\hat{g}}(0, 1)} \left| \langle\na \hat{F}_{\alpha}, \na \hat{F}_{\beta} \rangle-\delta_{\alpha \beta} \right|=0.  
\end{align*}
In light of (\ref{eqn:OI09_2}),  the above equation implies that
\begin{align}
   \fint_{B(x, \bar{s})} \left| \langle\na F_{\alpha}, \na F_{\beta} \rangle-\delta_{\alpha \beta} \right| 
   = \fint_{B_{\bar{g}}(x, 1)} \left| \langle\na \bar{F}_{\alpha}, \na \bar{F}_{\beta} \rangle-\delta_{\alpha \beta} \right| 
   \leq \tau(\epsilon | n) < \eta. 
  \label{eqn:OI09_13}
\end{align}
It is also clear that
\begin{align}
&T_{\bar{s}} \mathbf{F}(0)=0; \label{eqn:OI09_8}\\
&\sup_{B(x, \bar{s})} \left| \nabla \left(T_{\bar{s}} \mathbf{F} \right) \right| \leq n+\tau(\epsilon | n)<n+\eta; \label{eqn:OI09_9}\\
&\bar{s}^2 \fint_{B(x, \bar{s})}  \left| \nabla^2 (T_{\bar{s}} \mathbf{F})\right| \leq \tau(\epsilon|n) <\eta.  \label{eqn:OI09_10}
\end{align}
Therefore, it follows from (\ref{eqn:OI09_13}), (\ref{eqn:OI09_8}), (\ref{eqn:OI09_9}) and (\ref{eqn:OI09_10}) that $T_{\bar{s}}$ is an exact transformation, with coefficients sharper than $\eta$.
By continuity, it is clear that $T_{\bar{s}}$ can be deformed to an exact transformation for some $s$ slightly smaller than $\bar{s}$.   This contradicts the assumption of $\bar{s}$. 
\end{proof}

\section{A diffeomorphism with bi-H\"older estimate}
\label{sec:canonical}

In this section we shall show that $\tilde{f}$ indeed provides a diffeomorphism with bi-H\"older estimate. 

\begin{prop}
For any $\ep>0$, there exists $\delta_0(n,\ep)>0$ with the following properties.

Suppose $(M^n, g)$ is a closed manifold satisfying (\ref{eqn:OI01_4}) for some $\delta \in (0, \delta_0)$. 
Suppose $x, y\in M$ and $d(x,y)<1$.
Then $\tilde{f}$ satisfies the bi-H\"older estimate
\begin{align}
   (1-\epsilon)d(x,y)^{1+\epsilon}\leq d\left(\tilde{f}(x), \tilde{f}(y) \right) \leq (1+\epsilon)d(x,y),      \label{eqn:OH31_7}
\end{align}
where $d\left(\tilde{f}(x), \tilde{f}(y) \right)$ is the distance induced by the canonical round metric on $S^{n}$. 
\label{prn:OI09_14}
\end{prop}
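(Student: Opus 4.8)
The plan is to establish the two inequalities in (\ref{eqn:OH31_7}) separately, exploiting the transformation theorems of Section~\ref{sec:transformation} to upgrade the pointwise control on $\tilde f$ into metric control. The upper bound is the easy half: by Lemma~\ref{lma:OH30_12}, in particular the gradient estimate (\ref{eqn:OH30_9}), $\tilde f$ is $(1+\tau(\delta|n))$-Lipschitz as a map from $(M,g)$ into $(\tilde S, g_E|_{\tilde S})$. Since the chordal and round distances on $S^n$ are comparable and agree to first order (and $d(x,y)\le 1 < \pi$ keeps us away from the antipodal locus), integrating $|d\tilde f|$ along a minimizing geodesic from $x$ to $y$ in $M$ gives $d(\tilde f(x),\tilde f(y)) \le (1+\tau(\delta|n))d(x,y) \le (1+\epsilon)d(x,y)$ once $\delta$ is small. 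Alternatively one can invoke (\ref{eqn:OH30_7}), $|\tilde f(x)\cdot\tilde f(y) - \cos d(x,y)|\le\tau(\delta|n)$, together with the fact that $d_{S^n}(\tilde f(x),\tilde f(y)) = \arccos(\tilde f(x)\cdot\tilde f(y))$, to get the upper bound directly for all $x,y$ — but the Lipschitz route is cleaner for localizing.

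The lower bound $d(\tilde f(x),\tilde f(y)) \ge (1-\epsilon)d(x,y)^{1+\epsilon}$ is the substantive part, and here I would argue locally on dyadic scales using Transformation B. Fix $x$ and apply Theorem~\ref{thm:OH30_10} to get $T\in SO(n+1)$ with $T\tilde f(x)=q$ and $\mathbf F=((T\tilde f)_1,\dots,(T\tilde f)_n)$ an $(n,\epsilon)$-splitting on $B(x,r)$, $r=c_n\epsilon$. For a point $y$ with $d(x,y)=\rho \le r$, Theorem~\ref{thm:OI07_1} supplies a lower triangular $T_\rho$ (close to $I$ on each dyadic step, with total distortion controlled by $(\rho/\cdot)^{C\sqrt\eta}$ from Step~2 of its proof) so that $T_\rho\mathbf F$ is an $(n,\eta)$-splitting on $B(x,\rho)$. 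Now rescale by $\rho^{-1}$: Lemma~\ref{lma:OI10_1} (the segment-inequality estimate, applicable since the number of functions equals $\dim M$) yields
\begin{align*}
  \bigl| |T_\rho\mathbf F(y) - T_\rho\mathbf F(x)|^2 - d^2(x,y) \bigr| \le C(n)\eta^{\frac{1}{n+2}} d^2(x,y),
\end{align*}
after rescaling back. Hence $|T_\rho\mathbf F(y)| \ge (1 - C\eta^{\frac1{n+2}})\, d(x,y)$. Since $T_\rho = T_\rho\circ T_r^{-1}\circ T_r$ and $T_r$ is close to $I$ while $\|T_\rho\circ T_r^{-1}\| \le C(1 + (r/\rho)^{C\sqrt\eta})$ by Step~2 of Theorem~\ref{thm:OI07_1}, and $\mathbf F(x)=0$, we get
\begin{align*}
  |\mathbf F(y)| = |\mathbf F(y) - \mathbf F(x)| \ge \frac{|T_\rho\mathbf F(y)|}{\|T_\rho\circ T_r^{-1}\|\,\|T_r\|} \ge c\,(\rho/r)^{C\sqrt\eta}\, d(x,y) \ge c\, d(x,y)^{1+C\sqrt\eta},
\end{align*}
using $\rho = d(x,y) \le r$ and absorbing $r$-powers into constants. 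Choosing $\eta$ small relative to $\epsilon$ makes $C\sqrt\eta < \epsilon$ and $c > 1-\epsilon$ (after also using that $\mathbf F$ only drops the $(n+1)$-th coordinate of $T\tilde f$, which near $q$ is $O(d^2)$ and hence negligible). Since $|\mathbf F(y)| \le |T\tilde f(y) - T\tilde f(x)| = |\tilde f(y)-\tilde f(x)|$ and the chordal distance is bounded below by a constant multiple of the round distance on $d\le 1$, this gives the claimed lower bound.

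For points with $d(x,y) \ge r = c_n\epsilon$ bounded away from $0$, the local argument does not apply, but here one falls back on (\ref{eqn:OH30_7}): $|\tilde f(x)\cdot\tilde f(y) - \cos d(x,y)| \le \tau(\delta|n)$ forces $d_{S^n}(\tilde f(x),\tilde f(y))$ to within $\tau(\delta|n)$ of $d(x,y)$, which is more than enough once $d(x,y)$ is bounded below, since $d(x,y)^{1+\epsilon}$ and $d(x,y)$ are then comparable and the error $\tau(\delta|n)$ can be made smaller than $\epsilon\, d(x,y)^{1+\epsilon}$. Finally one patches the two regimes: for general $x,y$ with $d(x,y)\le 1$, either $d(x,y)< c_n\epsilon$ and the dyadic argument gives (\ref{eqn:OH31_7}) directly, or $d(x,y)\ge c_n\epsilon$ and the spectral estimate does. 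I expect the main obstacle to be the bookkeeping in the lower bound: tracking how the accumulated lower-triangular distortions $T_\rho\circ T_r^{-1}$ — whose operator norm grows like $(r/\rho)^{C\sqrt\eta}$ — interact with the $\eta^{1/(n+2)}$ error from Lemma~\ref{lma:OI10_1}, and verifying that the exponent $C\sqrt\eta$ and the multiplicative constant can both be driven below $\epsilon$ by the single choice of $\eta=\eta(n,\epsilon)$ (hence $\epsilon_0$, hence $\delta_0$). Care is also needed that the $(n+1)$-th coordinate discarded in passing from $T\tilde f$ to $\mathbf F$ genuinely contributes only a higher-order term near the basepoint.
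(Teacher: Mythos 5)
Your proposal follows essentially the same route as the paper's proof: Transformation A/B combined with Lemma~\ref{lma:OI10_1} at the scale $s=d(x,y)$ give the lower bound, the growth of the lower-triangular matrices ($\|T_s\|\lesssim s^{-\xi}$, from Step~2 of Theorem~\ref{thm:OI07_1}) produces the H\"older exponent, the gradient estimate yields the Lipschitz upper bound, and (\ref{eqn:OH30_7}) handles the range $d(x,y)\geq c_n\epsilon$. The differences are only cosmetic (the paper compares $\mathbf{F}$ with the round distance via the projection $P$ near the north pole and tracks the constants as $(1\pm\xi/3)$, where you integrate $|d\tilde f|$ and use the chordal--round comparison), so your argument is correct and in essentially the paper's form.
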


\begin{proof}

Fix $\xi=\tau(\delta | n)$ sufficiently small. 
By Theorem~\ref{thm:OH30_10} and Theorem~\ref{thm:OI07_1},  for any $x$, we can find a map $\mathbf{F}: M \to \R^n$
such that $\mathbf{F}|_{B(x,r)}$ is an $(n, \xi)$-splitting, where $r=c_n \xi$.  Furthermore, for each $s \in (0, r)$, 
there exists a lower triangular matrx $T_{s}$ such that $(T_{s} \mathbf{F})_{B(x, s)}$ is an $(n,\xi)$-splitting map. 

Note that $\mathbf{F}(x)=0$ from the definition of $(n, \xi)$-splitting.
From Lemma~\ref{lma:OI10_1} we know that
\begin{align}
  \left|T_{s}\mathbf{F}(y)-T_{s}\mathbf{F}(x) \right|\geq \left(1-\frac{\xi}{3} \right)d(x,y)
  \label{eqn:OI11_1}
\end{align}
for all $y \in  \partial B(x, s)$.  By (\ref{eqn:OI07_5}) in the proof of Theorem~\ref{thm:OI07_1},  we know 
\begin{align*}
  |T_{s}|\leq s^{-\xi}, 
\end{align*}
which implies that the smallest eigenvalue of $T_{s}^{-1}$ is at least $s^{\xi}$.  In other words, we have
\begin{align}
    T_{s}^{-1} (v) \geq s^{\xi} |v|, \quad \forall \; v  \in \R^n. 
 \label{eqn:OI11_2}    
\end{align}
Combining (\ref{eqn:OI11_1}) and (\ref{eqn:OI11_2}), we have
\begin{align*}
   |\mathbf{F}(y)-\mathbf{F}(x)|=\left| T_{s}^{-1} \left\{ T_{s} \mathbf{F}(y) - T_{s} \mathbf{F}(x) \right\}\right| \geq \left(1-\frac{\xi}{3} \right)d(x,y)^{1+\xi}.
\end{align*}

Since we also have the gradient estimate $|\nabla\mathbf{F}|\leq 1+\frac{\xi}{3}$, we now have for $y\in B(x, r)$ that
\begin{align*}
 \left(1-\frac{\xi}{3} \right)d(x,y)^{1+\xi}\leq |\mathbf{F}(x)-\mathbf{F}(y)|\leq \left(1+\frac{\xi}{3} \right)d(x,y).
\end{align*}
Recall that $q=(0,0,...,0,1)$ is the north pole. 
For the given $\xi>0$, there exists $r$ such that on $B(q, r)\subset S^n$ and we have
\begin{align*}
  \left(1-\frac{\xi}{3} \right)d(q,y)\leq |P(q)-P(y)|\leq \left(1+\frac{\xi}{3} \right) d(q,y), 
\end{align*}
where $P$ is a projection map defined as follows: 
\begin{align*}
   P: S^n &\mapsto  \R^{n},  \\
           (x_1,...,x_{n+1}) &\mapsto (x_1,...,x_n). 
\end{align*}
Since $\mathbf{F}$ is the composition of an orthogonal transformation and the projection map $P$, we have
\begin{align*}
 (1-\xi)d(x,y)^{1+\xi}\leq \left(1-\frac{\xi}{3} \right)|\mathbf{F}(x)-\mathbf{F}(y)|\leq d(\tilde{f}(x),\tilde{f}(y))\leq \left(1+\frac{\xi}{3} \right)|\mathbf{F}(x)-\mathbf{F}(y)|\leq (1+\xi)d(x,y).  
\end{align*}
This shows that (\ref{eqn:OH31_7}) is true for $d(x,y)<r=c\xi$. For $r\leq d(x,y)\leq 1$, from (\ref{eqn:OH30_7}), we know that by taking $\delta$ small enough we have that 
\begin{align*}
  (1-\xi)d(x,y)\leq d(\tilde{f}(x),\tilde{f}(y))\leq  (1+\xi)d(x,y), \quad \forall \;  d(x,y) \in (r, 1), 
\end{align*}
which implies (\ref{eqn:OH31_7}) immediately. 
\end{proof}

\begin{prop}
Same conditions as in Proposition~\ref{prn:OI09_14}.
Then $\tilde{f}$ is a diffeomorphism. 
\label{prn:OI09_15}
\end{prop}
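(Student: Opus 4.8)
The plan is to show $\tilde{f}: M \to S^n$ is a diffeomorphism by verifying that it is a local diffeomorphism and a global homeomorphism. The bi-H\"older estimate \eqref{eqn:OH31_7} from Proposition~\ref{prn:OI09_14} does most of the work for injectivity: if $\tilde{f}(x)=\tilde{f}(y)$ with $d(x,y) \leq 1$, then the left inequality forces $d(x,y)=0$. For points at distance $>1$ one uses \eqref{eqn:OH30_7}, which says $\tilde{f}(x)\cdot\tilde{f}(y)$ is close to $\cos d(x,y)$; since $\mathrm{rad}(M)$ is close to $\pi$ and $\tilde{f}$ nearly preserves distance on scale $1$, a chaining argument (covering a minimal geodesic by segments of length $<1$) shows $\tilde{f}$ is globally injective. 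So first I would record global injectivity of $\tilde{f}$.

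Next I would establish that $d\tilde{f}$ is nondegenerate everywhere, i.e. $\inf_{|v|=1}|d\tilde{f}(v)|>0$ pointwise (not uniformly — the uniform statement is false by Proposition~\ref{prn:OI19_1}). The cleanest route is again via the bi-H\"older estimate together with the local picture from the transformation theorems: near any $x$, after an orthogonal rotation $T$ sending $\tilde{f}(x)$ to the north pole $q$, the map $\mathbf{F}=((T\tilde f)_1,\dots,(T\tilde f)_n)$ is an $(n,\xi)$-splitting on $B(x,r)$, and the estimate $|\mathbf{F}(y)-\mathbf{F}(x)|\geq (1-\xi/3)\,d(x,y)^{1+\xi}$ shows $\mathbf{F}$ cannot collapse any direction at $x$; combined with $|d\mathbf{F}|\leq 1+\xi/3$ this gives that $d\mathbf{F}(x)$ (hence $d\tilde f(x)$, since $\tilde f(x)=q$ is the north pole where the projection $P$ is a local diffeomorphism) is an isomorphism. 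Hence $\tilde f$ is an immersion, and being an immersion between $n$-manifolds it is a local diffeomorphism, and being also injective it is an open embedding $M \hookrightarrow S^n$.

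Finally I would prove surjectivity. Since $M$ is closed, $\tilde{f}(M)$ is compact, hence closed in $S^n$; since $\tilde f$ is an open map, $\tilde f(M)$ is open; since $S^n$ is connected, $\tilde f(M)=S^n$. (Alternatively, as the paper signals, one invokes Cheeger's mod-$2$ degree argument: a proper local diffeomorphism from a closed manifold has constant mod-$2$ degree, and this degree is $1$ because on scale $1$ the map $\tilde f$ is close to an isometry, so generic points of $S^n$ have exactly one preimage.) Combining injectivity, local diffeomorphism, and surjectivity yields that $\tilde f$ is a bijective local diffeomorphism, hence a diffeomorphism.

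The main obstacle, as I see it, is the nondegeneracy of $d\tilde f$: the gradient estimate \eqref{eqn:OH30_9} only gives the \emph{upper} bound $|d\tilde f(v)|\leq 1+\tau$, and the paper explicitly warns there is no uniform lower bound, so one genuinely must extract the pointwise positivity from the H\"older lower bound $d(x,y)^{1+\xi}$ applied at infinitesimally small scales, which requires knowing the splitting/transformation estimates persist (after lower-triangular corrections) down to \emph{every} scale $s\in(0,r)$ — precisely the content of Theorem~\ref{thm:OI07_1}. Once the local-diffeomorphism property is in hand, injectivity and surjectivity are comparatively soft.
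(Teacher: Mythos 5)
Your proposal is correct, and for injectivity and nondegeneracy it is essentially the paper's argument: the paper also kills injectivity by noting (via Petersen's result that $\tilde f$ is a $\tau(\delta|n)$-Gromov--Hausdorff approximation, equivalently via (\ref{eqn:OH30_7})) that any two preimages of the same point are at distance $<1$, so the H\"older lower bound forces them to coincide; and its Step 3 proves nondegeneracy by exactly the mechanism you flag as the main obstacle, namely that at a point with $\det(d\tilde f)(x)=0$ one picks a geodesic $\gamma$ with $\frac{d}{dt}\big|_{t=0}\tilde f(\gamma(t))=0$, so $d(\tilde f(\gamma(t)),\tilde f(x))=O(t^2)$ by Taylor expansion, contradicting the lower bound $(1-\epsilon)t^{1+\epsilon}$ since $1+\epsilon<2$. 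One small remark on your version: once Proposition~\ref{prn:OI09_14} is in hand, this Taylor-expansion step is all that is needed — you do not have to re-invoke Theorem~\ref{thm:OI07_1} or the splitting map $\mathbf{F}$ at this stage, and phrasing the collapse argument through $|\mathbf{F}(y)-\mathbf{F}(x)|\geq(1-\xi/3)d(x,y)^{1+\xi}$ still implicitly requires the same second-order expansion at the degenerate point; also, your chaining for $d(x,y)>1$ is unnecessary, since (\ref{eqn:OH30_7}) alone gives $\cos d(x,y)\geq 1-\tau$ and hence $d(x,y)\leq\tau'$. Where you genuinely diverge is surjectivity: the paper runs Cheeger's mod-$2$ degree argument (locating, via the splitting estimates and a covering/maximal-function argument, a regular value with exactly one preimage, so $\#\{\tilde f^{-1}(\cdot)\}\equiv 1 \bmod 2$ off the critical set $E$, whence $S^n\setminus E\subset \mathrm{Im}(\tilde f)$ and closedness of the image finishes), and it does this \emph{before} proving nondegeneracy; you instead prove nondegeneracy first and then get surjectivity for free from the soft topological fact that an injective immersion of a closed $n$-manifold into the connected $S^n$ has open and closed, hence full, image. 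Your reordering is legitimate and arguably simpler, since nondegeneracy depends only on the bi-H\"older estimate and not on surjectivity; the paper's degree argument is heavier but is self-contained at the level of the splitting estimates and mirrors the classical proof of volume continuity in~\cite{1}.
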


\begin{proof}

\textit{Step 1. Injectivity.}

By the work of Petersen~\cite{13}, we know that $\tilde{f}$ is a $\tau(\delta | n)$-Gromov-Hausdorff approximation, which implies that
\begin{align*}
        \left| d(x,y) - d(\tilde{f}(x), \tilde{f}(y)) \right|< \tau(\delta | n)
\end{align*}
for every pair of points $x, y \in M$. 
Fix a point $w \in Im(\tilde{f})$.  It follows from the above inequality that 
\begin{align*}
       d(x, y) < \tau(\delta | n)<1
\end{align*}
for every pair of points $x, y \in \tilde{f}^{-1} (w)$. 
Therefore, the first inequality in (\ref{eqn:OH31_7}) applies and yields that
\begin{align*}
  (1-\epsilon) d(x,y)^{1+\epsilon} \leq d \left( \tilde{f}(x), \tilde{f}(y) \right)=d(w,w)=0, 
\end{align*}
which forces that $x=y$.  Therefore $\tilde{f}^{-1} (w)$ consists of a single point.
The injectivity of $\tilde{f}$ is proved. \\
 
 \textit{Step 2. Surjectivity.}
   
  The proof of surjectivity is deeply affected by the volume continuity proved by Colding \cite{6}. The argument below follows the route of the proof of volume continuity in Cheeger's note~\cite{1}.

  Recall that $v$ is a critical value of $\tilde{f}$ if $v=\tilde{f}(x)$ and $D \tilde{f}(x)$ degenerate for some $x \in M$. 
Let $E$ be the collection of all critical values of $\tilde{f}$.  From elementary differential topology,  we know that  
$\#\{\tilde{f}^{-1}(\cdot)\} \; mod \; 2$  is a constant on $S^n\backslash E$.

Firstly we show that there exists a  point $z_0\in S^n$ such that $\tilde{f}^{-1}(v)$ contains exactly one point in M.

In light of Theorem~\ref{thm:OH30_10},  for each $\ep>0$ and $\delta$ sufficiently small,  we have 
\begin{align*}
\mathbf{F}|_{B(x,r)}=((T\tilde{f})_1,...,(T\tilde{f})_n)|_{B(x,r)}, \quad r=c_n \epsilon
\end{align*}
is an $(n, \ep)$-splitting.   It follows from Definition~\ref{dfn:OH30_14} that 
\begin{align}
\fint_{B(x,r)} \left\{ \sum\limits_{1\leq \alpha \leq n}||\na (T\tilde{f})_i|^2-1|^2+\sum_{1\leq \alpha< \beta \leq n}|\langle\na (T\tilde{f})_{\alpha}, \na (T\tilde{f})_{\beta}\rangle|+r^2\sum\limits_{1\leq \alpha \leq n}|Hess \,\,(T\tilde{f})_{\alpha}|^2 \right\} \leq C\ep.
\label{eqn:OI10_5}
\end{align}
Let $W \subset B(x,r)$ be the union of closed geodesic balls $\overline{B(z, \rho)} \subset B(x,r)$ satisfying
\begin{align*}
   \fint_{B(z, \rho)} \left\{\sum\limits_{1\leq \alpha \leq n}||\na (T\tilde{f})_{\alpha}|^2-1|^2+\sum_{1\leq \alpha < \beta \leq n}|\langle\na (T\tilde{f})_{\alpha}, \na (T\tilde{f})_{\beta}\rangle|+\rho^2\sum\limits_{1\leq \alpha \leq n}|Hess \,\,(T\tilde{f})_{\alpha}|^2 \right\} > \xi. 
\end{align*}
By a standard covering argument(cf. Lemma 10.9 of~\cite{1}), we can find countably many disjoint balls $B(z_i, \rho_i)$ such that $W \subset \cup_{i=1}^{\infty} B(z_i, 5\rho_i)$. 
It follows that
\begin{align}
&\quad \xi Vol(B(z_i, \rho_i)) \notag\\
&< \int_{B(z_i, \rho_i)} \left\{\sum\limits_{1\leq \alpha \leq n}||\na (T\tilde{f})_{\alpha}|^2-1|^2+\sum_{1\leq \alpha < \beta \leq n}|\langle\na (T\tilde{f})_{\alpha}, \na (T\tilde{f})_{\beta}\rangle|+\rho^2\sum\limits_{1\leq \alpha \leq n}|Hess \,\,(T\tilde{f})_{\alpha}|^2 \right\}.
\label{eqn:OI10_3}
\end{align}
In light of volume comparison,  we have 
\begin{align}
  Vol(W) \leq   \sum_{i=1}^{\infty} Vol(B(z_i, 5\rho_i)) = \sum_{i=1}^{\infty} \frac{Vol(B(z_i, 5\rho_i))}{Vol(B(z_i, \rho_i))} \cdot Vol(B(z_i, \rho_i)) 
              \leq 5^{n} \sum_{i=1}^{\infty} Vol(B(z_i, \rho_i)) .  
\label{eqn:OI10_4}
\end{align}
Note that $\{B(z_i, \rho_i)\}_{i=1}^{\infty}$ is a disjoint collection and $\cup_{i=1}^{\infty} B(z_i, \rho_i) \subset B(x,r)$. It follows from the combination of  (\ref{eqn:OI10_3}) and (\ref{eqn:OI10_4}) that 
\begin{align*}
    &\quad Vol(W) \\
    &\leq \frac{5^n}{\xi} \int_{B(x,r)} \left\{\sum\limits_{1\leq \alpha \leq n}||\na (T\tilde{f})_{\alpha}|^2-1|^2+\sum_{1\leq \alpha < \beta \leq n}|\langle\na (T\tilde{f})_{\alpha}, \na (T\tilde{f})_{\beta}\rangle|+\rho^2\sum\limits_{1\leq \alpha \leq n}|Hess \,\,(T\tilde{f})_{\alpha}|^2 \right\} .
\end{align*}
Dividing both sides of the above inequality by $Vol(B(x,r))$ and applying (\ref{eqn:OI10_5}), we obtain 
\begin{align*}
 \frac{Vol(W)}{Vol(B(x,r))} \leq \frac{5^n}{\xi} \cdot C \epsilon < \tau(\epsilon| n), 
\end{align*}
where we appropriately choose $\xi$ in the last step.    Let $V=B(x,r) \backslash W$. Then we have 
\begin{align*}
&Vol(V)\geq \{1-\tau(\ep | n)\} Vol(B(x,r)), \\
&\fint_{B(z,l)} \left\{\sum\limits_{1\leq \alpha \leq n}||\na (T\tilde{f})_i|^2-1|^2+\sum_{1\leq \alpha< \beta \leq n}|\langle\na (T\tilde{f})_{\alpha}, \na (T\tilde{f})_{\beta}\rangle|+l^2\sum\limits_{1\leq \alpha \leq n}|Hess \,\,(T\tilde{f})_{\alpha}|^2 \right\} \leq \tau(\ep | n), 
\end{align*}
for all $z\in V$ and $B(z,l) \subset B(x,r)$. 
Thus we have
\begin{align}
l^2=\sum\limits_{1\leq \alpha  \leq n} \left\{(T\tilde{f})_{\alpha}-(T\tilde{f})_{\alpha}(z) \right\}+\tau(\ep | n)l^2\quad on \quad \partial B(z,l). 
\label{eqn:0I10_20z}
\end{align}
From (\ref{eqn:OH30_7}) we know that if $\tilde{f}(a)=\tilde{f}(b)$, we have  
\begin{align*}
|a-b|<\tau(\ep | n)
\end{align*}
when taking $\ep$ small enough such that $\tau(\ep |n )<\frac{r}{100}$.
In particular, we can choose $z_0\in B(x,\frac{r}{2})\cap V$ and $l_0=\frac{r}{2}$. Consider $\tilde{f}(z_0)\in S^n$. If $\tilde{f}(a)=\tilde{f}(z_0)$ for some $a \neq z_0$, then we have 
\begin{align*}
a \in B(z_0, l_0)\subset B(x,r). 
\end{align*}
Choose $T$ above with respect to $z_0$.  Namely,  rotate $\tilde{f}(z_0)$ to the north pole $q=(0, \cdots, 0, 1)$.
Combining this with (\ref{eqn:0I10_20z}) we get $1=\tau(\ep)$, which is a contradiction. Therefore we obtain $\tilde{f}^{-1}(\tilde{f}(z_0))=\{z_0\}$. 

Since we know that $T\tilde{f}(z_0)=(0,0,...,0,1)$, the first $n$ coordinate functions give a local chart near $T\tilde{f}(z_0)$. From the definition of V,  by sending  $l\to 0$, we know that $\na (T\tilde{f})_{\alpha}(z_0)$, 
$\alpha=\,1,2,...,n$ are almost orthogonal. This shows that $D(T\tilde{f})$ is non-degenerate at $z_0$. Combining with that $\tilde{f}^{-1}(\tilde{f}(z_0))=\{z_0\}$, we know that $\tilde{f}(z_0)\in S^n\setminus E$.
Thus there exists a point $\tilde{f}(z_0)=v\in S^n\backslash E$ such that $\tilde{f}^{-1}(v)$ contains exactly one point in M, which means that   
\begin{align*}
\#\{\tilde{f}^{-1}(\cdot)\} \equiv1 \; mod \; 2 
\end{align*}
on $S^n\backslash E$. In particular, we have $\#\{\tilde{f}^{-1}(\cdot)\} \geq 1$ on $S^n \backslash E$ and consequently 
\begin{align*}
    S^n \backslash E \subset Im(\tilde{f}). 
\end{align*}
Since $\tilde{f}$ is smooth and $M$ is compact, it is clear that $Im(\tilde{f})$ is a closed set. 
As $E$ is a measure-zero set, it follows that
\begin{align*}
   S^n=\overline{S^n \backslash E} \subset Im(\tilde{f}), 
\end{align*}
which means that $\tilde{f}$ is a surjective map.\\

\textit{Step 3. Nondegeneracy of $d\tilde{f}$.} 
  
  By now, we have showed that $\tilde{f}$ is a smooth homeomorphism map from $M$ to $S^n$.  In order to show $\tilde{f}$ being a diffeomorphism, it suffices to show the non-degeneracy of $d\tilde{f}: T_x M\rightarrow T_{\tilde{f}(x)} S^{n}$.
  By choosing natural orthonormal bases, we have
  \begin{align*}
     d\tilde{f}: \R^n \cong T_x M \to T_{\tilde{f}(x)} S^{n} \cong \R^n. 
  \end{align*}
  Thus we can regard $d\tilde{f}$ as an $(n \times n)$-matrix and $\det (d\tilde{f})(x)$ is well-defined.  
  The map $\tilde{f}$ is non-degenerate at $x$ if and only if $\det (d\tilde{f})(x) \neq 0$. 
  We shall prove this by contradiction. Suppose $\det(d\tilde{f})(x)=0$, then there exists a short geodesic $\gamma:[-\eta,\eta] \rightarrow M$ such that $\gamma(0)=x$ and $\left. \frac{d}{dt} \right|_{t=0} \tilde{f}(\gamma(t))=0$. 
Since $\tilde{f}$ is smooth, we denote that $a=\left. \frac{d^2}{dt^2} \right|_{t=0} \tilde{f}(\gamma(t))$, then from elementary calculus we know that $ d(\tilde{f}(\gamma(t)),\tilde{f}(\gamma(0)))=\frac{|a|}{2}t^2+o(t^2)$, which contradicts the bi-H\"older estimate (\ref{eqn:OH31_7}) when $t<<1$. 
\end{proof}

\section{Failure of bi-Lipschitz estimate}
\label{sec:example}

In this section we shall show the sharpness of the bi-H\"older estimate of the canonical diffeomorphism.
By constructing explicit examples, we show that the bi-Lipschitz estimate fails.  
The strategy is as follows, we first consider the football metrics on $S^n$ with two poles and find that the canonical eigenfunction-map has degenerate Jacobian at two poles. 
Then we delicately smooth the poles by spherical suspension metrics, and show that the Jacobian at some point  near the vertices of the smooth metric is very small. 
Of course, we need to keep the Ricci lower bound in the smoothing process.  
The observation dates back to \cite{26}\cite{18}, cf. section 1 of \cite{18}. 
The explicit construction of smoothened metric in this section also fills up some details of the construction in~\cite{20}.\\

Consider the rotationally symmetric metric $dr^2+\phi^2 d \boldsymbol{\theta}^2$, where $d \boldsymbol{\theta}^2$ is the standard round metric on $S^{n-1}$. 
Denote the corresponding coordinates in $S^{n-1}$ by $\theta_1, \theta_2,..., \theta_{n-1}$. 
At any given point $p$, we may  assume that $\left\{\frac{\partial}{\partial \theta_1}, \frac{\partial}{\partial \theta_2}, \cdots, \frac{\partial}{\partial \theta_{n-1}},\,\frac{\partial}{\partial r} \right\}$ is orthonormal at the point $p$ without loss of generality.
Then at point $p$ we have
\begin{align}
&Rc\left(\frac{\partial}{\partial \theta_i},\frac{\partial}{\partial \theta_j} \right)=\left((n-2)\frac{1-(\phi')^2}{\phi^2} -\frac{\phi''}{\phi} \right)\delta_{ij}, \label{eqn:OI20_2z}\\
&Rc\left(\frac{\partial}{\partial \theta_i},\frac{\partial}{\partial r} \right)=0, \label{eqn:OI20_21z}\\
&Rc\left(\frac{\partial}{\partial r},\frac{\partial}{\partial r} \right)=-(n-1)\frac{\phi''}{\phi}.\label{eqn:OI20_22z}
\end{align}
In order to guarantee $Rc\geq n-1$,  it suffices to show that
\begin{align}
 \phi''\leq-\phi, \quad \textrm{and} \quad \phi^2 + (\phi')^2 \leq 1.      \label{eqn:OI26_3}
\end{align}

Suppose we are allowed to study the problem in the category of singular manifolds,  we shall see that the bi-Lipschitz estimate fails. 
We consider the football metric, which can be written in polar coordinate as
\begin{align}
g_{c}=dr^2+c^2\sin^2r\; d \boldsymbol{\theta}^2,  \quad c \in (0, 1].   \label{eqn:OI17_1}
\end{align}
In this case,  $\phi=c \sin r$, it is clear that (\ref{eqn:OI26_3}) holds.  Thus $Rc \geq n-1$.
The volume of $(S^n, g_c)$ is close to the volume of standard sphere if $c$ is close to $1$. 
The Laplacian operator can be expressed as
\begin{align}
\Delta_{g_c} =\frac{\partial^2 }{\partial r^2}+\frac{1}{c^2 \sin^2 r}\Delta_{S^{n-1}}+(n-1) \cot r \frac{\partial}{\partial r}. 
\label{eqn:OI17_2}
\end{align}
Let
\begin{align}
    \alpha \coloneqq \frac{-n+\sqrt{n^2-4(n-1)(1-\frac{1}{c^2})}}{2}.    \label{eqn:OI21_2}
\end{align}
Let $h_1, h_2, \cdots, h_{n}$ be the first $n$ eigenfunctions of $(S^{n-1}, d \boldsymbol{\theta}^2)$. 
Separation of variables yields that the first $n+1$ eigenvalues and eigenfunctions of $-\Delta_{g_c}$ are 
\begin{align}
\begin{cases}
 \lambda_1=2, \quad &f_1=A\cos r;\\
 \lambda_{k+1}=\frac{-n+\sqrt{n^2-4(n-1)(1-\frac{1}{c^2})}}{2}+1+\frac{n-1}{c^2}, \quad &f_{k+1}=B \sin^{1+\alpha} r h_{k}(\boldsymbol{\theta}), \quad 1 \leq k \leq n.
\end{cases} 
\label{eqn:OI17_3}
\end{align}
Here $A$ and $B$ are constants adjusted for normalization conditions.  Then 
\begin{align*}
   \tilde{f}&=\frac{1}{\sqrt{A^2 \cos^2 r + B^2 \sin^{2+2\alpha} r}} \cdot \left( A\cos r, B\sin^{1+\alpha} r h_1(\boldsymbol{\theta}), \cdots, B \sin^{1+\alpha} r  h_{n}(\boldsymbol{\theta}) \right), \\
   d\tilde{f} \left( \frac{\partial}{\partial r} \right)&=\frac{\left( -A \sin r,  B(1+\alpha) \sin^{\alpha} r \cos r h_1(\boldsymbol{\theta}), \cdots, B(1+\alpha )\sin^{\alpha} r \cos r h_{n}(\boldsymbol{\theta}) \right) }{\sqrt{A^2 \cos^2 r + B^2 \sin^{2+2\alpha} r}}\\
    &\quad+\frac{\sin r \cos r \left( A^2-B^2(1+\alpha) \sin^{2\alpha} r\right)}{\left( \sqrt{A^2 \cos^2 r + B^2 \sin^{2+2\alpha} r} \right)^3} \cdot \left( A \cos r, B\sin^{1+\alpha} r h_1(\boldsymbol{\theta}), \cdots, B \sin^{1+\alpha} r h_{n}(\boldsymbol{\theta})  \right).
\end{align*}
It follows that
\begin{align*}
   \lim_{r \to 0^{+}} \left|  d\tilde{f} \left( \frac{\partial}{\partial r} \right)\right| =0, 
\end{align*}
which means $\tilde{f}$ does not induce a map with uniform bi-Lipschitz constant.   However, as the football metric is not smooth at two ends, it cannot serve as a genuine counter-example.
In order to achieve so, we need to smooth it at two poles while keeping the volume and curvature assumption.

\begin{prop}
 For each $n \geq 2$ and $\epsilon>0$, there exists a rotationally symmetric metric $g=dr^2+ \phi^2(r) d \boldsymbol{\theta}^2$ on $S^n$ satisfying (\ref{eqn:OI01_4}) and
 \begin{align}
     \inf_{x \in S^n}  \left|  d\tilde{f} \left( \frac{\partial}{\partial r} \right)\right|  \leq \epsilon,   \label{eqn:OI21_3}
 \end{align}
 no matter how small the number $\delta$ is in the inequality (\ref{eqn:OI01_4}). 
\label{prn:OI19_1}  
\end{prop}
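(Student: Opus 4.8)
The plan is to build $g$ by modifying the football metric $g_c$ of \eqref{eqn:OI17_1} near its two conical vertices $r=0$ and $r=\pi$, replacing $\phi=c\sin r$ by a smooth profile $\phi(r)$ that agrees with $c\sin r$ away from small neighborhoods of the poles and that continues to satisfy the differential inequalities \eqref{eqn:OI26_3}. Concretely, fix $c$ close to $1$ (how close will be dictated by $\epsilon$ and $\delta$ at the end), pick a small $\rho>0$, and on $[0,\rho]$ replace $c\sin r$ by a smooth odd function $\phi$ with $\phi(0)=0$, $\phi'(0)=1$, all even derivatives vanishing at $0$ (so the metric closes up smoothly at the pole), and $\phi\equiv c\sin r$ for $r\geq 2\rho$; interpolate on $[\rho,2\rho]$. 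Do the symmetric thing at $r=\pi$. The content is that this interpolation can be done \emph{keeping} $\phi''\le -\phi$ and $\phi^2+(\phi')^2\le 1$, hence $Rc\ge n-1$ by \eqref{eqn:OI20_2z}--\eqref{eqn:OI26_3}; this is a one-variable ODE-comparison/convexity exercise, and as $\rho\to 0$ the metric $g$ converges to $g_c$ in the measured Gromov--Hausdorff sense, so $Vol(S^n,g)\to Vol(S^n,g_c)$, which in turn is $\ge (n+1)\omega_{n+1}-\delta$ once $c$ is close enough to $1$ and then $\rho$ small enough. Thus \eqref{eqn:OI01_4} holds.

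Next I would compute the eigenfunctions of $-\Delta_g$. Because the metric is rotationally symmetric, separation of variables applies exactly as in \eqref{eqn:OI17_2}--\eqref{eqn:OI17_3}: the first eigenfunction is radial, $f_1=f_1(r)$ solving the Sturm--Liouville problem $f_1''+(n-1)\tfrac{\phi'}{\phi}f_1'=-\lambda_1 f_1$ with the Neumann-type regularity at both poles, and $f_{k+1}=\psi(r)h_k(\boldsymbol\theta)$ for $1\le k\le n$, where $h_k$ are the first eigenfunctions of $S^{n-1}$ and $\psi$ solves the corresponding radial ODE with the boundary behavior $\psi(r)\sim r$ as $r\to 0$. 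For $\rho\le r\le \pi-\rho$ the profile is exactly $c\sin r$, so on that range $\psi$ is a linear combination of the two football solutions $\sin^{1+\alpha}r$ and (the second, singular-at-$0$) solution, with $\alpha$ as in \eqref{eqn:OI21_2}; the smoothing only changes $\psi$ on $[0,\rho]\cup[\pi-\rho,\pi]$. The point is to track $\tilde f=f/|f|$ and $d\tilde f(\partial/\partial r)$ near $r=0$.

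The key estimate is that $|d\tilde f(\partial_r)|$ is small somewhere. Using the explicit formula for $d\tilde f(\partial_r)$ exactly as in the football computation, at a radius $r$ in the transition-to-football regime $|f|^2=f_1(r)^2+\psi(r)^2$ and
\[
 \Bigl|d\tilde f\Bigl(\tfrac{\partial}{\partial r}\Bigr)\Bigr|^2=\frac{f_1'(r)^2\psi(r)^2-2f_1(r)f_1'(r)\psi(r)\psi'(r)+\psi'(r)^2 f_1(r)^2}{\bigl(f_1(r)^2+\psi(r)^2\bigr)^2},
\]
which, since the numerator is the square of the Wronskian-type quantity $f_1'\psi-f_1\psi'$ divided through, vanishes to leading order precisely when $f_1$ and $\psi$ are proportional. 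As $c\to 1$ one has $\alpha\to 0$, so $\psi(r)\approx \sin r$ on the football range while $f_1(r)\approx \cos r$; the ratio $\psi/f_1$ runs from $0$ at a pole to $\infty$ and, more importantly, the quantity $|d\tilde f(\partial_r)|$ along the radial geodesic is continuous and, at the smoothed pole $r=0$, a direct computation (all even derivatives of $\phi$ vanish, so $\psi$ is smooth and $\psi'(0)$ finite, while $f_1'(0)=0$) shows $|d\tilde f(\partial_r)|(0)$ is controlled by $|\psi'(0)|/|f_1(0)|$ times correction terms — and by choosing $c$ close to $1$ and $\rho$ small this quantity, or the quantity at a nearby interior radius where $f_1$ and $\psi$ nearly align, is $\le\epsilon$. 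In effect we are reproducing $\lim_{r\to 0^+}|d\tilde f(\partial_r)|=0$ of the football, now with a genuinely smooth metric for which the limiting value is merely small rather than zero.

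\emph{Main obstacle.} The delicate part is the simultaneous control of the smoothed radial eigenfunctions $f_1$ and $\psi$ near the poles: I must show that the smoothing can be arranged so that $f_1,\psi$ and their derivatives are $C^0$-close to the football solutions $\cos r$, $\sin^{1+\alpha}r$ on the whole interval (in particular that the eigenvalues converge), which requires a stability statement for the Sturm--Liouville problem under the $C^0$-perturbation $\phi$ together with the boundary regularity — and then to extract from this a point where $f_1'\psi-f_1\psi'$ is small relative to $(f_1^2+\psi^2)$. Everything else (the curvature inequality for the interpolated $\phi$, the volume convergence, the separation of variables) is routine; the quantitative eigenfunction comparison near the vertices is where the care is needed.
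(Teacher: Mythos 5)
There is a genuine gap at the step you dismiss as a routine ``one-variable ODE-comparison/convexity exercise.'' With the boundary data you prescribe, the interpolation is in fact impossible: if $\phi$ is smooth at the pole ($\phi(0)=0$, $\phi'(0)=1$), positive on $(0,b]$, and agrees with $c\sin r$ to first order at some $b\in(0,\pi)$ with $c<1$, then $\phi''\le-\phi$ cannot hold on all of $[0,b]$. Indeed, set $W(r)=\phi'(r)\sin r-\phi(r)\cos r$; then $W(0)=0$ and $W'=(\phi''+\phi)\sin r\le 0$ on $[0,b]$, while the matching conditions give $W(b)=c\cos b\sin b-c\sin b\cos b=0$, forcing $\phi''\equiv-\phi$ and hence $\phi\equiv\sin r$, contradicting $\phi(b)=c\sin b<\sin b$. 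So the radial Ricci curvature must dip below $n-1$ somewhere in the transition region, and (\ref{eqn:OI26_3}) cannot be preserved exactly for a profile that is literally $c\sin r$ outside small polar caps. This is exactly why the paper's construction only establishes the relaxed inequalities $\phi''\le-(1-\xi)\phi$ and $(\phi')^2+(1-\xi)\phi^2\le1$, glues the round cap $\sin(ar)/a$ to a \emph{phase-shifted} football profile $c\sin(r-\rho-\zeta+\xi)$, and then rescales the metric to restore $Rc\ge n-1$, sending $\xi\to0^+$ (and $c\to1^-$) at the end to recover (\ref{eqn:OI01_4}); without this extra slack your gluing step fails.

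The second gap is the one you flag yourself: the comparison of the eigenvalues and radial eigenfunctions of the smoothed metric with the football ones is left unproved, and it is not a routine Sturm--Liouville stability statement, since the perturbation sits at the degenerate endpoint of the radial problem and the limit profiles $\sin^{1+\alpha}r$ are themselves non-smooth there. The paper avoids ODE perturbation altogether: it uses continuity of the spectrum under measured Gromov--Hausdorff convergence (\cite{21}) to pin down the first $n+1$ eigenvalues of $(S^n,g_c)$, and the $W^{1,2}$-convergence theorem (Theorem~\ref{thm:OH31_4}, together with Lemma~\ref{lma:OH30_11}) to show the normalized eigenfunctions of $g_{\xi,c}$ sub-converge to $C_A\cos r$ and $C_B\sin^{1+\alpha}r\,h_k$ as $\xi\to0^+$. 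Since $\alpha>0$, these limits have small value \emph{and} small gradient near the pole, so one finds a point within distance $\eta$ of the pole where all radial derivatives of all $n+1$ eigenfunctions are $\le\eta$ while $|f|$ stays bounded below; this gives (\ref{eqn:OI21_3}) directly, with no need for the Wronskian cancellation you invoke (your displayed formula is also slightly off, as $|f|^2=f_1^2+\psi^2\sum_k h_k^2$ and the denominator power differs, though the mechanism is morally right). To salvage your route you would need both to repair the gluing as above and to actually prove the eigenfunction convergence; the paper's preliminaries are the intended tools for the latter.
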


\begin{proof}
We shall construct the metric by gluing a smooth small sphere metric to a football metric with two singular points.
The rotational symmetry will be preserved in the process.  Thus we are actually gluing two functions
\begin{align*}
    \phi_{g_c}=c \sin r, \; 0<c <1;   \quad \textrm{and} \quad \phi_{a^{-2} g_{round}}=\frac{\sin\left(ar\right)}{a}, \; a > 1. 
\end{align*}
It is clear that $Rc\geq n-1$ in either of the above cases.

Choose a small positive constant $\xi$ and define
\begin{align}
    a \coloneqq \frac{\sqrt{1-c^2 \cos^2 \xi}}{c \sin \xi}, \quad \rho \coloneqq \frac{\arccos (c \cos \xi)}{a}. 
\end{align}
Then we have
\begin{align}
 c \sin\xi =\frac{\sin\left(a \rho \right)}{a}, \quad c \cos \xi=\cos\left(a \rho \right).\label{eqn:OI20_4z}
\end{align}
It is clear that $a \to +\infty$ and $\rho \to 0^{+}$, as $\xi \to 0^{+}$.

Take a smooth function 
\begin{align*}
\psi:\R\to \R,\,\,\psi(x)=\left\{\begin{array}{ccc}0, \,\,\,\,\quad\forall x\leq 0;\\1, \,\,\,\,\quad\forall x\geq 1.\end{array}\right.
\end{align*}
Denote
 \begin{align}
 l(t) \coloneqq -\left\{\left(1-\psi\left(\frac{t-\rho}{\zeta}\right)\right)a\,\sin\left(a\,t\right)
+\psi\left(\frac{t-\rho}{\zeta}\right)c\,\sin\left(t-\rho-\zeta+\xi\right)\right\}. 
\label{eqn:OI20_5z}
\end{align} 
From its definition, for each integer $k \geq 0$, we have
\begin{align}
&l^{(2k)}(\rho)=(-1)^{k+1} a^{2k+1} \sin (a \rho),  \quad l^{(2k)}(\rho+\zeta)=(-1)^{k+1} c \sin \xi; \label{eqn:OI26_1}\\
&l^{(2k+1)}(\rho)=(-1)^{k+1} a^{2k+2} \cos (a \rho),  \quad  l^{(2k+1)}(\rho+\zeta)= (-1)^{k+1} c \cos \xi.   \label{eqn:OI26_2}
\end{align}
Then we construct the function $\phi=\phi_{\xi,\zeta,\kappa}$ as follows.  
\begin{itemize}
\item[(1).]  If $r \in [0,\rho]$, then 
       \begin{align}
        \phi \coloneqq \frac{\sin(ar)}{a}. 
        \label{eqn:OI27_3}   
      \end{align}
\item[(2).]  If $r \in [\rho,\rho+\zeta]$, then 
     \begin{align}
        \phi \coloneqq \frac{\sin(a\rho)}{a}+(r-\rho)\cos(a\rho)+\int_{s=\rho}^{r}\int_{t=\rho}^{s}l(t). 
     \label{eqn:OI27_1}   
     \end{align}
\item[(3).]  If $r \in [\rho+\zeta,\rho+\zeta+\kappa]$, then 
        \begin{align}
          \phi&\coloneqq c \;\sin \xi+c(r-\rho-\zeta)\cos \xi+\left(1-\psi\left(\frac{r-(\rho+\zeta)}{\kappa}\right)\right)\int_{s=\rho}^{r}\int_{t=\rho}^{s}l(t) \notag\\
                &\quad+\psi\left(\frac{r-(\rho+\zeta)}{\kappa}\right)\int_{s=\rho+\zeta}^{r}\int_{t=\rho+\zeta}^{s}l(t)+\left(1-\psi\left(\frac{r-(\rho+\zeta)}{\kappa}\right)\right)c\zeta\,\cos \xi. 
                \label{eqn:OI27_2}
        \end{align}
\item[(4).]  If $r \in [\rho+\zeta+\kappa,  \frac{\pi}{2} + \rho + \zeta -\xi]$,  then 
       \begin{align}
        \phi \coloneqq c\,\sin(r-\rho-\zeta+\xi). 
        \label{eqn:OI27_4}   
      \end{align}
\end{itemize}
In light of (\ref{eqn:OI20_4z}), (\ref{eqn:OI26_1}) and (\ref{eqn:OI26_2}), direct calculation implies that $\phi$ is smooth at $r=\rho, \rho+\zeta$ and $\rho+\zeta+\kappa$. 
Consequently, $\phi$ is smooth on $[0, \frac{\pi}{2} + \rho + \zeta -\xi)$.  By reflection, we can smooth the other pole similarly.  Therefore, the metric $dr^2 + \phi^2  d \boldsymbol{\theta}^2$ is a smooth metric on $S^n$. 

\textbf{Claim:\;}\textit{For each fixed $\xi \in (0, 1)$ and sufficiently small  $\kappa$ and $\zeta << \kappa$,   the smooth function $\phi=\phi_{\xi, \zeta, \kappa}$ constructed above satisfies
\begin{align}
  &\phi''\leq-(1-\xi)\phi,    \label{eqn:OI26_5}\\
  &(\phi')^2+(1-\xi)\phi^2\leq 1.     \label{eqn:OI26_6}
\end{align}
}

By discussion at the beginning of this section, it is clear that (\ref{eqn:OI26_5}) and (\ref{eqn:OI26_6}) are satisfied in cases (1) and (4).  
Thus we shall only focus on the proof of (\ref{eqn:OI26_5}) and (\ref{eqn:OI26_6}) in cases (2) and (3). 

We first show (\ref{eqn:OI26_5}). 
On the interval $ [\rho,\rho+\zeta]$, it follows from (\ref{eqn:OI20_5z})  and (\ref{eqn:OI27_1}) that
\begin{align*}
  -\phi''&=-l(r)=(1-\psi) a \sin (a r) + \psi c \sin (r-\rho -\zeta + \xi)= (1-\psi) a \sin (a \rho) + \psi c \sin \xi + O(\zeta), \\
  \phi&=\frac{\sin (a \rho)}{a} + O(\zeta). 
\end{align*}
Applying (\ref{eqn:OI20_4z}) on the above equations, we have
\begin{align*}
  -\phi''=\frac{\sin (a \rho)}{a} + (1-\psi) \left(a-\frac{1}{a} \right) \sin (a \rho) + O(\zeta)
   \geq \frac{\sin (a \rho)}{a} + O(\zeta) \geq (1-\xi) \phi.
\end{align*}
Thus we obtain (\ref{eqn:OI26_5}) in case (2).
On the interval $[\rho+\zeta,\rho+\zeta+\kappa]$,  the second derivative $\phi''$ can be expressed as
\begin{align*} 
&\quad l(r)+\left\{\psi\left(\frac{r-(\rho+\zeta)}{\kappa}\right)\left(\int_{s=\rho+\zeta}^{r}\int_{t=\rho+\zeta}^{s}l(t)-\int_{s=\rho}^{r}\int_{t=\rho}^{s}l(t)\right)+\left(1-\psi\left(\frac{r-(\rho+\zeta)}{\kappa}\right)\right)c\zeta\,\cos \xi \right\}''\\
&=l(r)+\frac{1}{\kappa^2}\psi''\left(\frac{r-(\rho+\zeta)}{\kappa}\right)\left(\int_{s=\rho+\zeta}^{r}\int_{t=\rho+\zeta}^{s}l(t)-\int_{s=\rho}^{r}\int_{t=\rho}^{s}l(t)-c\zeta\,\cos \xi \right)\notag\\
&\quad \quad +2\frac{\psi'\left(\frac{r-(\rho+\zeta)}{\kappa}\right)}{\kappa}\left(\int_{t=\rho+\zeta}^{r}l(t)-\int_{t=\rho}^{r}l(t)\right)\\
&=l(r) + O\left(\frac{\zeta^2}{\kappa^2} \right) + O\left(\frac{\zeta}{\kappa} \right). 
\end{align*}
Since $\zeta << \kappa$ according to our choice, it follows from (\ref{eqn:OI20_5z}),  (\ref{eqn:OI27_2}) and the above inequality that 
\begin{align*}
&-\phi''=c \sin \xi + O(\kappa)+O\left(\frac{\zeta}{\kappa} \right), \\
&\phi=c \sin \xi + O(\kappa). 
\end{align*}
Recalling that both $\kappa$ and $\frac{\zeta}{\kappa}$ are very small,  we immediately obtain (\ref{eqn:OI26_5}) holds on $[\rho+\zeta, \rho+\zeta+\kappa]$, i.e., in case (3).

We move on to show (\ref{eqn:OI26_6}).  Taking derivative of (\ref{eqn:OI27_1}) yields that
\begin{align*}
\phi'=\cos (a \rho) + \int_{t=\rho}^{r} l(t), \quad \textrm{on} \quad [\rho, \rho+\zeta]. 
\end{align*}
Since $\zeta$ is sufficiently small, it follows from (\ref{eqn:OI20_4z}) and (\ref{eqn:OI27_1}) that
\begin{align*}
   (\phi')^2 + (1-\xi) \phi^2 \leq  (\phi')^2 + \phi^2 =\left\{ \frac{\sin (a\rho)}{a}\right\}^2 + \cos^2 (a \rho) + O(\zeta)=c^2+O(\zeta)<1, 
\end{align*}
which shows (\ref{eqn:OI26_6}) in case (2).  
On the interval $[\rho+\zeta,\rho+\zeta+\kappa]$, by (\ref{eqn:OI27_2}) we have
\begin{align*}
 \phi'
 &=c \cos \xi + \int_{t=\rho}^{r} l(t) \\
 &\quad+ \left\{\psi\left(\frac{r-(\rho+\zeta)}{\kappa}\right)\left(\int_{s=\rho+\zeta}^{r}\int_{t=\rho+\zeta}^{s}l(t)-\int_{s=\rho}^{r}\int_{t=\rho}^{s}l(t)\right)+\left(1-\psi\left(\frac{r-(\rho+\zeta)}{\kappa}\right)\right)c\zeta\,\cos \xi \right\}'\\
 &=c \cos \xi + O(\zeta + \kappa); \\
 \phi
 &=c \sin \xi + O(\zeta+\kappa). 
\end{align*}
It follows that 
\begin{align*}
 (\phi')^2 + (1-\xi) \phi^2=  (c \cos \xi)^2 + (1-\xi) (c \sin \xi)^2 + O(\zeta+\kappa) \leq c^2 + O(\zeta+\kappa) <1, 
\end{align*}
which proves (\ref{eqn:OI26_6}) in case (3).    The proof of the Claim is complete. \\

By (\ref{eqn:OI26_5}) and (\ref{eqn:OI26_6}), it follows from (\ref{eqn:OI20_2z})-(\ref{eqn:OI20_22z}) that  $Rc \geq (n-1)(1-\xi)$ under the metric $dr^2 + \phi^2 d \boldsymbol{\theta}^2$.  
Let $g_{\xi,c}=(1-\xi)^{-1} \left\{ dr^2 + \phi^2 d \boldsymbol{\theta}^2 \right\}$.  Then we have 
\begin{align}
  Rc \geq n-1, \quad \textrm{under} \; g_{\xi,c}.     \label{eqn:OI27_5}
\end{align}
Recall that $g_c=dr^2+c^2\sin^2 r d \boldsymbol{\theta}^2$. Then it is clear that
\begin{align}
&\lim_{\xi \to 0^+}d_{GH}\left(\left(S^n,g_{\xi,c}\right),\left(S^n, g_c \right)\right)=0,   \label{eqn:OI27_9}\\
&\lim_{c\to 1^-}d_{GH}\left(\left(S^n,g_{round}\right),\left(S^n, g_c \right) \right)=0. \label{eqn:OI27_10}
\end{align}
The spectrum of the standard sphere is
\begin{align*}
 \lambda_{1}=\lambda_{2}=\cdots=\lambda_{n+1}=n<\lambda_{n+2}=2(n+1) \leq \cdots. 
\end{align*}
By continuity of spectrum under the Gromov-Hausdorff convergence(cf.~\cite{21}),  we know that  the spectrum of $\left(S^n, g_c \right)$ is close to that on $\left(S^n,g_{round} \right)$.
Therefore,  when $c$ is close to $1$,  there are exactly $n+1$ eigenvalues strictly less than $n+2$. More precisely, we have
\begin{align*}
&\lambda_{1,S_c^n}=n; \\
&\lambda_{k+1,S_c^n}=\alpha+1+\frac{n-1}{c^2}, \quad \forall \; k \in \{1, 2, \cdots, n\};\\
&\lambda_{n+2, S_c^n} \geq n+2. 
\end{align*}
Here $\alpha$ is defined in (\ref{eqn:OI21_2}). 
Take eigenfunctions $f_{1,\xi,c},\,\,f_{2,\xi,c}, \cdots, f_{n+1,\xi,c}$ on $\left(S^n,g_{\xi,c}\right)$ corresponding to the first $(n+1)$-eigenvalues with 
\begin{align*}
&\fint_{(S^n,g_{\xi,c})}\left|f_{k,\xi,c}\right|^2dvol_{\left(S^n,g_{\xi,c}\right)}=\frac{1}{n+1}, \quad \forall \; 1 \leq k \leq n+1;\\
&\fint_{\left(S^n,g_{\xi,c}\right)}f_{j,\xi,c}f_{k,\xi,c} dvol_{\left(S^n,g_{\xi,c}\right)}=0, \quad  \forall \; 1 \leq j<k \leq n+1. 
\end{align*}
From Lemma~\ref{lma:OH30_11} and Theorem \ref{thm:OH31_4},  we know that  these eigenfunctions sub-converge in $W^{1,2}$-sense to the eigenfunctions
\begin{align*}
  C_A \cos r, \quad C_B \sin^{1+\alpha} r \left\{ \sum_{k=1}^{n} p_{kj} h_j(\boldsymbol{\theta})\right\},  \quad k=1, 2, \cdots, n
\end{align*}
on $\left(S^n, g_{c} \right)$, as $\xi \to 0^+$. 
Here $p_{kj}$ is an $(n \times n)$-orthonormal matrix, $\{h_k\}$ are eigenfunctions corresponding to eigenvalue $n-1$ on $(S^{n-1}, d \boldsymbol{\theta}^2)$. 
Note that the following equations hold:
 \begin{align}
 \lim_{r \to 0^+} \sin^{1+\alpha} r h_k(\boldsymbol{\theta})(p)=0, \quad
 \lim_{r \to 0^+} \left| \na\left(\sin^{1+\alpha} r  h_k(\boldsymbol{\theta})\right) \right| =0, \quad k=1, 2, \cdots, n. 
 \label{eqn:OI27_8} 
 \end{align}
For each fixed small number $\eta$, $c=1-\eta$ and $\xi$ sufficiently small,  by (\ref{eqn:OI27_8}) and the $W^{1,2}$-convergence along (\ref{eqn:OI27_9}), we can find a point $(\rho, \boldsymbol{\theta})$ with $\left|\rho\right|<\eta$ such that
\begin{align*}
&\left|\na f_{1,\xi,c}\left(\rho,\boldsymbol{\theta} \right)\right|^2\leq  \eta^2;\\
&\left|f_{k,\xi,c}\left(\rho, \boldsymbol{\theta} \right)\right|^2+\left|\na f_{k,\xi,c}\left(\rho, \boldsymbol{\theta} \right)\right|^2\leq \eta^2, \quad k=2, 3, \cdots, n+1. 
\end{align*}
Thus we have
\begin{align}
 \left|d \left(\frac{f_{1,\xi,c}}{\sqrt{\sum_{j=1}^{n+1}f_{j,\xi,c}^2}},\frac{f_{2,\xi,c}}{\sqrt{\sum_{j=1}^{n+1}f_{j,\xi,c}^2}}\,,...\,,\frac{f_{n+1,\xi,c}}{\sqrt{\sum_{j=1}^{n+1}f_{j,\xi,c}^2}} \right) \left(\frac{\partial}{\partial r}  \right) \right|\left(\rho, \boldsymbol{\theta} \right)\leq  C \eta.
\label{eqn:OI27_6} 
\end{align}
However,  in light of (\ref{eqn:OI27_9}) and (\ref{eqn:OI27_10}), the metric $g_{\xi, c}=g_{\xi_{\eta}, 1-\eta}$ satisfies
\begin{align*}
 \lim_{\eta \to 0^+}d_{GH}\left(\left(S^n,g_{round}\right),\left(S^n, g_{\xi, c} \right) \right)=0. 
\end{align*} 
By (\ref{eqn:OI27_5}), the above line is equivalent (cf. Theorem~\ref{thm:OH30_5}) to
\begin{align}
     \lim_{\eta \to 0^+} Vol \left(S^n, g_{\xi, c} \right)=Vol \left(S^n,g_{round}\right).    \label{eqn:OI27_7}
\end{align}
Therefore, (\ref{eqn:OI21_3}) follows immediately from the combination of (\ref{eqn:OI27_6}) and (\ref{eqn:OI27_7}). 
\end{proof}

Now we finish the proof of the main theorem. 

\begin{proof}[Proof of Theorem~\ref{thm:main}]
In Proposition~\ref{prn:OI09_15}, we show that the canonical map $\tilde{f}$ defined in (\ref{eqn:OH30_3}) is a diffeomorphism.
The uniform bi-H\"older estimate (\ref{eqn:OH30_4}) is proved in Proposition~\ref{prn:OI09_14}. 
Finally, the sharpness of the bi-H\"older estimate is proved in Proposition~\ref{prn:OI19_1} by example construction. 
Therefore, the proof of Theorem~\ref{thm:main} is complete. 
\end{proof}

\begin{rem}
In Perelman's paper~\cite{20}, as a cone has $Rc(T,T)=0$, we need to change the region $t\in[0,\frac{T}{2}]$ to a smooth space with positive Ricci curvature. Originally near the singular point the space is a nonsymmetrical cone with metric $g=dt^2+\frac{t^2}{100}(dx^2+dy^2+(1-\gamma(t))^2dz^2)$, where $\gamma',\gamma''>0$ for $t\in (\frac{T}{2},T)$ and $\gamma(t)=0,\,\,t\in[0,\frac{T}{2}]$. As when we denote $g=dt^2+A^2(dx^2+dy^2)+C^2dz^2$, using Perelman's notation in ~\cite{20} we would have 
\begin{align}
&\frac{Rc(X,X)}{\|X\|^2}=\frac{Rc(Y,Y)}{\|Y\|^2}=-\frac{A''}{A}-\frac{(A')^2}{A^2}-\frac{A'C'}{AC}+\frac{1}{A^4}(-2C^2+4A^2)\label{Ric X},\\
&\frac{Rc(Z,Z)}{\|Z\|^2}=-\frac{C''}{C}-\frac{2A'C'}{AC}+\frac{2C^2}{A^4}\label{Ric Z},\\
&\frac{Rc(T,T)}{\|T\|^2}=-\frac{A''}{A}-\frac{B''}{B}-\frac{C''}{C}.\label{Ric T}
\end{align} 
The subtlety here is that now the space is nonsymmetric and we need to guarantee that the whole space has positive Ricci curvature. As in Proposition \ref{prn:OI19_1}, we can set  $g_{xx}''(t)=g_{yy}''(t)=\psi_1(t) (\frac{\sin(at)}{a})''$ and 
$g_{zz}''(t)=\psi_2(t)(\frac{\sin(at)}{a})''+\frac{1}{10}(1-\psi_2(t))(t-t\gamma(t))''$, which are all negative, to smoothly change $\tilde{g}=dt^2+(\frac{\sin(at)}{a})^2g_{S^3}$ to $g=dt^2+(\frac{t}{10}+\ep_1)^2(dx^2+dy^2)+(\frac{t}{10}(1-\gamma(t))+\ep_2)^2dz^2$ and the paste point is $t=\frac{T}{2}+\ep_3$, where $\ep_1,\ep_2,\ep_3$ can be chosen sufficiently small. Since we know that in the process $A,B,C,A',B',C'$ remain almost the same and as in the sphere $\tilde{g}=dt^2+(\frac{\sin(at)}{a})^2g_{S^3}$\begin{align}-\frac{(A')^2}{A^2}-\frac{A'C'}{AC}+\frac{1}{A^4}(-2C^2+4A^2)=-\frac{2A'C'}{AC}+\frac{2C^2}{A^4}=2a^2>0\end{align} while $-\frac{A''}{A}=-\frac{B''}{B}\geq 0$, $-\frac{C''}{C}>0$ in the whole process, where we use that $\gamma''(\frac{T}{2}+\ep_3),\gamma'(\frac{T}{2}+\ep_3)>0$, the positivity of Ricci curvature in the smoothing process is guaranteed. This could generate remaining terms $\ep_1,\ep_2$ in the metric, where $\ep_1,\ep_2$ could be sufficiently small and this would not affect the positivity of Ricci curvature in the region 
$t\in [\frac{T}{2}+\ep_3,+\infty)$.  
\label{rmk:OI28_1}
\end{rem}

\begin{rem}
By slight rescaling, (\ref{eqn:OI01_4}) is equivalent to the following condition
\begin{subequations}
  \begin{empheq}[left = \empheqlbrace \,]{align}
    &Rc \geq n-1-\delta, \label{eqn:OI28_1a} \\
    &Vol(M, g)=(n+1) \omega_{n+1}.    \label{eqn:OI28_1b}
  \end{empheq}
\label{eqn:OI28_1}  
\end{subequations}
\hspace{-3mm}
The same conclusion in Theorem~\ref{thm:main} holds if we replace (\ref{eqn:OI01_4}) by (\ref{eqn:OI28_1}).
In light of the structure theory of manifolds with bounded Ricci curvature integral,  the condition (\ref{eqn:OI28_1a}) can be replaced by $\int \{Rc-(n-1)\}_{-}^{p}<\delta(n,p)$ for some $p>\frac{n}{2}$. 
Such generalization in the Ricci flow case was already achieved in~\cite{30}. 
\label{rmk:OI28_2}
\end{rem}

%\bibliography{bib/bib}

\vskip10pt

Bing Wang,  Institute of Geometry and Physics, and Key Laboratory of Wu Wen-Tsun Mathematics,
School of Mathematical Sciences, University of Science and Technology of China, No. 96 	Jinzhai Road, Hefei, Anhui Province, 230026, China. 

Email: topspin@ustc.edu.cn.

\vskip10pt
Xinrui Zhao, Department of Mathematics, Massachusetts Institute of Technology, 77 Massachusetts Avenue, Cambridge, MA 02139-4307, USA. 

Email: xrzhao@mit.edu.

\end{document}